\documentclass[10pt, twoside, leqno]{article}
\usepackage[indentfirst]{titlesec}
\usepackage{amssymb,latexsym}
\usepackage{enumerate}
\usepackage{amsmath, amsthm, amsfonts, amssymb, mathrsfs}
\pagestyle{myheadings} \markboth{H. Liu}{Characterization of $H^p$}

\newtheorem{theorem}{Theorem}[section]
\newtheorem{lemma}[theorem]{Lemma}

\newtheorem{definition}[theorem]{Definition}
\newtheorem{remark}[theorem]{Remark}
\newtheorem{conjecture}[theorem]{Conjecture}

\numberwithin{equation}{section}

\numberwithin{equation}{section}

\frenchspacing

\textwidth=16.5cm \textheight=23cm
\parindent=16pt
\oddsidemargin=-0.3cm \evensidemargin=-0.3cm \topmargin=-0.5cm

\begin{document}

\baselineskip=17pt

\title{Variational characterization of  $H^p$}

\date{}

\maketitle

 \begin{center}

{\bf Honghai Liu}\\
School of Mathematics and information Science\\
Henan Polytechnic University\\
Jiaozuo 454003\\
The People's Republic of China \\
E-mail: {\it hhliu@hpu.edu.cn}
\end{center}

\renewcommand{\thefootnote}{}

\footnote{2010 \emph{Mathematics Subject Classification}: Primary
42B20; Secondary   42B25.}

\footnote{\emph{Key words and phrases}: Variation,
Oscillation, $\lambda$-jump, Hardy space, Approximate identities.}

\footnote{The research was supported by
NSF of China (Grant: 11501169 and 11371057).}

\renewcommand{\thefootnote}{\arabic{footnote}}
\setcounter{footnote}{0}

\begin{abstract}
In this paper we obtain the variational characterization of Hardy space $H^p$ for $p\in(\frac n{n+1},1]$ and get estimates for the oscillation operator and the $\lambda$-jump operator associated with approximate identities acting on $H^p$ for $p\in(\frac n{n+1},1]$. Moreover, we give counterexamples to show that the oscillation and $\lambda$-jump associated with some approximate identity can not be used to characterize $H^p$ for $p\in(\frac n{n+1},1]$.
\end{abstract}
\newpage

 \section{Introduction}
 Variational, oscillation and jump inequalities have been the subject of many
 recent articles in probability, ergodic theory and harmonic
 analysis. The first variational inequality was proved by L\'epingle \cite{Lep76} for martingales.  Using L\'epingle's result, Bourgain \cite{Bou89} is the first one who to obtain corresponding variational estimates for the Birkhoff ergodic averages and then directly deduce pointwise convergence results without previous knowledge that pointwise convergence holds for a dense subclass of functions, which are not available in some ergodic models. Bourgain's work has initiated a new research direction in ergodic theory and harmonic analysis. In \cite{CJRW2000,CJRW2002,JKRW98,JRW03,JSW08}, Jones and his collaborators systematically studied jump and variational inequalities for ergodic averages and truncated singular integrals. Since then many other publications came to enrich the literature on this subject. Analogs are also true for corresponding maximal operators which were known. A number of phenomena show that variational, jump and oscillation operators seems to play the same role as maximal operators in harmonic analysis.
 \par
 In this paper we consider variation, oscillation and $\lambda$-jump operators associated
with approximate identities. The variational inequality gives us the characterization of $H^p$ for $p\in(\frac n{n+1},1]$. We obtain estimates for the oscillation and $\lambda$-jump acting on Hardy space. A counterexample show that oscillation and $\lambda$-jump operators are too small to characterize Hardy space. Before we present our main results we recall some definitions and known results.

Let $\mathcal I$ be a subset of $\mathbb R^+$, $\mathfrak{a}=\{a_t: t\in \mathcal I\}$ be a family of complex numbers and $\rho\ge1$. The $\rho$-variation norm of the family $\mathfrak{a}$ is defined by
\begin{equation}\nonumber
\|\mathfrak{a}\|_{v_\rho(\mathcal I)}=\sup\big(\sum_{k\geq1}
|a_{t_k}-a_{t_{k-1}}|^\rho\big)^{\frac{1}{\rho}},
\end{equation}
where the supremum runs over all finite decreasing sequences $\{t_k\}$ in $\mathcal I$. We denote the norm $v_\rho(\mathbb R^+)$ by $v_\rho$ for short. It is trivial that
\begin{equation}\nonumber
\|\mathfrak{a}\|_{L^\infty(\mathcal I)}:=\sup_{t\in \mathcal I}|a_t|
\le |a_{t_0}|+\|\mathfrak{a}\|_{v_\rho(\mathcal I)}
\end{equation}
for any $t_0\in \mathcal I$ and $\rho\ge1$.
\par
Given a family of Lebesgue measurable functions $\mathcal F(x)=\{F_t(x):t\in \mathcal I\}$, the value of the $\rho$-variation function $\mathscr V_q(\mathcal F)$ of the family $\mathcal F$ at $x$ is defined by
\begin{equation}\nonumber
\mathscr V_\rho(\mathcal F)(x)=\|\{F_t(x)\}\|_{v_\rho(\mathcal I)},\quad \rho\ge1.
\end{equation}
 Specially, suppose $\mathscr{A}=\{{A}_t\}_{t>0}$ is a family of operators, the $\rho$-variation operator related $\mathscr A$ is simply defined as
$$\mathscr V_\rho(\mathscr Af)(x)=\|\{A_t(f)(x)\}_{t>0}\|_{v_\rho}.$$
It is easy to observe that for any fixed $x\in\mathbb R^n$, if $\mathscr V_\rho(\mathscr Af)(x)<\infty$, then $\lim\limits_{t\rightarrow0^+}A_t(f)(x)$ and  $\lim\limits_{t\rightarrow+\infty}A_t(f)(x)$ exist. In particular, if $\mathscr V_\rho(\mathscr Af)$ belongs to some function spaces such as $L^p$ or $L^{p,\infty}$, then the sequence converges almost everywhere without any additional condition. This is why mapping property of $\rho$-variation operator is so interesting in probability, ergodic theory and harmonic analysis. In 1976, L\'{e}pingle \cite{Lep76} showed that the $\rho$-variation operator related to a bounded  martingale sequence is a bounded operator on $L^p$ for $1<p<\infty$ and $\rho>2$. These estimates can fail for $\rho\le2$, see \cite{JW04,Q98}. So, we need the oscillation operator to substitute the $2$-variation operator.
\par
For each fixed decreasing sequence $\{t_i\}$ in $\mathbb R^+$, we also define the oscillation operator related to $\mathscr A$
\begin{equation}\nonumber
\mathscr O(\mathscr Af)(x)=\big(\sum_{i=1}^\infty\sup_{t_{i}\le\varepsilon_{i}<\varepsilon_{i+1}\le t_{i+1}}|A_{\varepsilon_{i+1}}f(x)-A_{\varepsilon_{i}}f(x)|^2\big)^{1/2}.
\end{equation}
\par
We also study the $\lambda$-jump operator. For $\lambda>0$, the value of the $\lambda$-jump function for $\mathcal F$ at $x$ is defined by
$$N_\lambda(\mathcal F)(x)=\sup\big\{N\in\Bbb N:\ \exists\ s_1<\varepsilon_1\leq s_2<\varepsilon_2\leq\dotsc\leq s_N<\varepsilon_N\ \text{such that\ } |F_{\varepsilon_k}(x)-F_{s_k}(x)|>\lambda\big\}.$$
Similarly, we define the $\lambda$-jump operator related to $\mathscr A$ as
$N_\lambda(\mathscr Af)(x)=N_\lambda(\{A_tf\}_{t>0})(x)$. Obviously, if $\lim\limits_{t\rightarrow0^+}A_tf(x)$ and $\lim\limits_{t\rightarrow+\infty}A_tf(x)$ exist, then $N_\lambda(\mathscr Af)(x)<\infty$ for any $\lambda>0$. Moreover, for $\lambda>0$ and $\rho\ge1$
 \begin{equation}\label{contr ineq}
\lambda[N_\lambda(\mathscr Af)(x)]^{1/\rho}\leq C_\rho\mathscr V_\rho(\mathscr Af)(x).
\end{equation}
\par
Let $\phi\in\mathscr S$ with $\int \phi dx=1$, $\phi_t(x)=\frac1{t^n}\phi(\frac xt)$, denote function family $\{\phi_t\ast f(x)\}_{t>0}$ by $\Phi\star f(x)$.  Let $f$ be a tempered distribution, we define maximal function $M_\phi$ by
$$
M_\phi f(x)=\sup_{t>0}|(f\ast\phi_t)(x)|.
$$
\begin{definition}
Let $0<p<\infty$. A distribution $f$ belongs to $H^p$ if the maximal function $M_\phi f$ is in $L^p$.
\end{definition}
\par
The main results of this paper are the following three theorems.
\begin{theorem}\label{SchHp}
For any $\rho>2$, there exists $C_{\rho}>0$ such that
\begin{equation}\label{VISchHp}
\|\mathscr V_\rho(\Phi\star f)\|_{L^p}\le C_{\rho}\|f\|_{H^p},\ \ \frac n{n+1}<p\le1.
\end{equation}
Moreover, for $\frac n{n+1}<p<\infty$, the following conditions are equivalent:
\begin{description}
\item (i) There is a $\phi\in\mathscr S$ with $\int\phi dx\neq0$ so that $M_\phi f\in L^p$.
\item (ii) For any $\rho>2$, there is a $\phi\in\mathscr S$ with $\int\phi dx\neq0$ so that $\phi\ast f$ and $\mathscr V_\rho(\Phi\star f)$ in $L^p$.
\end{description}
\end{theorem}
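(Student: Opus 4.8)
The plan is to establish the quantitative bound \eqref{VISchHp} first, and then to read off the equivalence of (i) and (ii) from it together with the elementary inequality $\|\mathfrak a\|_{L^\infty(\mathcal I)}\le|a_{t_0}|+\|\mathfrak a\|_{v_\rho(\mathcal I)}$ recalled above and the classical maximal characterization of $H^p$. For \eqref{VISchHp} I would use the atomic decomposition of $H^p$: write $f=\sum_j\lambda_j a_j$ with $\{\lambda_j\}\in\ell^p$, $\|\{\lambda_j\}\|_{\ell^p}\lesssim\|f\|_{H^p}$, where each $a_j$ is a $(p,\infty)$-atom supported on a ball $B_j=B(x_j,r_j)$ with $\|a_j\|_{L^\infty}\le|B_j|^{-1/p}$ and $\int a_j\,dx=0$; only the zeroth moment need vanish because $\lfloor n(1/p-1)\rfloor=0$ for $p>\frac n{n+1}$. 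Since the $\rho$-variation is subadditive in the underlying family and $p\le1$, it suffices to prove $\|\mathscr V_\rho(\Phi\star a)\|_{L^p}\le C$ uniformly over all such atoms $a$ (the passage of $\mathscr V_\rho$ through the infinite sum being justified on the full-measure set where $\sum_j|\lambda_j|\mathscr V_\rho(\Phi\star a_j)$ is finite).

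Fix an atom $a$ on $B=B(x_0,r)$ and split $\mathbb R^n=2B\cup(2B)^c$. On $2B$ I would invoke the known $L^2$-boundedness of $\mathscr V_\rho(\Phi\star\cdot)$ for $\rho>2$ and Schwartz $\phi$ (see e.g.\ \cite{JSW08}) together with H\"older's inequality:
$$\int_{2B}\mathscr V_\rho(\Phi\star a)^p\,dx\le|2B|^{1-p/2}\Big(\int\mathscr V_\rho(\Phi\star a)^2\,dx\Big)^{p/2}\lesssim|B|^{1-p/2}\|a\|_{L^2}^p\lesssim|B|^{1-p/2}|B|^{p/2-1}=C.$$
On $(2B)^c$ I would use $\mathscr V_\rho\le\mathscr V_1$ and the identity $\partial_t(\phi_t\ast a)(x)=-t^{-1}(\psi_t\ast a)(x)$, where $\psi(z)=n\phi(z)+z\cdot\nabla\phi(z)\in\mathscr S$ has $\int\psi\,dz=0$, so that $\mathscr V_1(\Phi\star a)(x)\le\int_0^\infty t^{-1}|\psi_t\ast a(x)|\,dt$. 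For $x$ with $d:=|x-x_0|\ge2r$, splitting the integral at $t=d$ and using, for $t\le d$, the vanishing moment of $a$ and the Schwartz decay of $\nabla\psi$ — which yields $|\psi_t\ast a(x)|\lesssim_N r\,t^{N-n-1}d^{-N}\|a\|_{L^1}$ for $N$ large — and, for $t>d$, the coarser bound $|\psi_t\ast a(x)|\lesssim r\,t^{-n-1}\|a\|_{L^1}$, one gets $\mathscr V_1(\Phi\star a)(x)\lesssim r\,d^{-n-1}\|a\|_{L^1}\lesssim r\,d^{-n-1}|B|^{1-1/p}$. Summing over the dyadic shells $d\sim2^kr$, $k\ge1$,
$$\int_{(2B)^c}\mathscr V_\rho(\Phi\star a)^p\,dx\lesssim\sum_{k\ge1}(2^kr)^n\big(2^{-k(n+1)}|B|^{-1/p}\big)^p\lesssim\sum_{k\ge1}2^{k(n-(n+1)p)}<\infty,$$
the series converging precisely because $p>\frac n{n+1}$; this is exactly where the restriction on $p$ enters and why a single vanishing moment is enough.

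For the equivalence, (ii)$\Rightarrow$(i) is immediate: with $t_0=1$ the inequality recalled above gives $M_\phi f\le|\phi\ast f|+\mathscr V_\rho(\Phi\star f)$, so (ii) forces $M_\phi f\in L^p$ for a $\phi$ with $\int\phi\,dx\neq0$, which is (i). For (i)$\Rightarrow$(ii): if $M_\phi f\in L^p$ with $\int\phi\,dx\neq0$ then $f\in H^p$ by the standard equivalence of the maximal characterizations of $H^p$; choosing $\phi\in\mathscr S$ with $\int\phi\,dx=1$ (for instance the normalization of the given one), \eqref{VISchHp} gives $\mathscr V_\rho(\Phi\star f)\in L^p$ when $\frac n{n+1}<p\le1$, while for $1<p<\infty$ one has $H^p=L^p$ and uses the known $L^p$-boundedness of $\mathscr V_\rho(\Phi\star\cdot)$ for $\rho>2$ instead; in either case $\phi\ast f=\phi_1\ast f$ is dominated by $M_\phi f\in L^p$, so (ii) holds.

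The main obstacle is the estimate on $(2B)^c$: one must squeeze the decay $r\,d^{-n-1}$ out of a single vanishing moment, which requires splitting the $t$-integral at the scale $t\sim d$ and exploiting the Schwartz decay of the auxiliary kernel $\psi_t$; after that the geometric series converges only for $p>\frac n{n+1}$, in line with the counterexamples announced in the abstract showing that the oscillation and $\lambda$-jump variants are strictly weaker in this range. Secondary technical points are the justification of $\mathscr V_\rho(\Phi\star f)\le\sum_j|\lambda_j|\mathscr V_\rho(\Phi\star a_j)$ and the appeal to the $L^2$ (respectively $L^p$) variational bounds for general Schwartz approximate identities.
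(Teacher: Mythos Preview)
Your proposal is correct and follows essentially the same route as the paper: reduce \eqref{VISchHp} to a uniform bound on atoms, handle the local piece near the support via H\"older and the $L^2$ variational inequality (the paper's Theorem~\ref{SchLp}), and control the far-away piece by passing to $\mathscr V_1$ and using cancellation plus Schwartz decay to extract the $|x-x_0|^{-(n+1)}$ decay; the equivalence of (i) and (ii) is then read off exactly as you do from $M_\phi f\le|\phi\ast f|+\mathscr V_\rho(\Phi\star f)$ together with \eqref{VISchHp} and \eqref{VISchp}. The only cosmetic differences are that the paper works with $(p,2)$-atoms on cubes and the enlargement $8Q$, and bounds $\|\Phi(x-y)-\Phi(x-x_0)\|_{v_1}$ directly via the mean value theorem in the spatial variable (obtaining $\lesssim|y-x_0|\,|x-x_0|^{-n-1}$ in one stroke) rather than introducing your auxiliary kernel $\psi$ and splitting the $t$-integral at $t=|x-x_0|$.
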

In the above result, the variation operator is used to characterize $H^p$ spaces,
it is natural to ask if the analogue for the oscillation operator holds.
\begin{theorem}\label{oHp}
For any $\{t_i\}\searrow0$ and $p\in(\frac n{n+1},1]$, there exists a positive constant $C_p$ such that
\begin{equation}\label{OISchHp}
\|\mathscr O(\Phi\star f)\|_{L^p}\le C_p\|f\|_{H^p}.
\end{equation}
Moreover, there exists $\{t_i\}\searrow0$, $\phi\in\mathscr S$ with $\int\phi dx\neq0$ and $f\in \mathscr S$ such that $\mathscr O(\Phi\star f)\in L^p$ and $M_\phi(f)\notin L^p$ for any $p\in(0,1]$.
\end{theorem}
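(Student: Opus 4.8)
The plan is to prove the estimate \eqref{OISchHp} and the counterexample separately, and for \eqref{OISchHp} to imitate the atomic argument that underlies \eqref{VISchHp}. Since $\mathscr O(\Phi\star\,\cdot\,)$ is sublinear and $\|\cdot\|_{L^p}^p$ is subadditive for $p\le1$, by the atomic decomposition of $H^p$ it suffices to show $\|\mathscr O(\Phi\star a)\|_{L^p}^p\le C$ with $C$ independent of the $(p,\infty)$-atom $a$; the reduction to atoms and the passage back to a general $f\in H^p$ are the same as in the proof of Theorem \ref{SchHp}. So I would fix an atom $a$ supported in a ball $B=B(x_0,r)$ with $\|a\|_\infty\le|B|^{-1/p}$ and $\int a\,dx=0$, and split $\int_{\mathbb R^n}\mathscr O(\Phi\star a)^p\,dx=\int_{2B}+\int_{\mathbb R^n\setminus 2B}$. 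On $2B$, since $p<2$, Hölder's inequality gives
\[
\int_{2B}\mathscr O(\Phi\star a)^p\,dx\le|2B|^{1-p/2}\Big(\int_{\mathbb R^n}\mathscr O(\Phi\star a)^2\,dx\Big)^{p/2}\le C|B|^{1-p/2}\|a\|_{L^2}^p\le C|B|^{1-p/2}|B|^{(\frac12-\frac1p)p}=C,
\]
where I invoke the $L^2$-boundedness of the oscillation operator attached to the smooth approximate identity $\{\phi_t\}$; this is known and provable by Littlewood--Paley/square-function methods in the spirit of the oscillation inequalities of \cite{JSW08,CJRW2000}.

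For the tail $\mathbb R^n\setminus 2B$ I would use the pointwise domination
\[
\mathscr O(\Phi\star a)(x)\le\sum_i\int_{t_{i+1}}^{t_i}\big|\partial_t(\phi_t\ast a)(x)\big|\,dt\le\int_0^\infty\big|\partial_t(\phi_t\ast a)(x)\big|\,dt,
\]
together with $\partial_t\phi_t(y)=t^{-n-1}\psi(y/t)$, where $\psi(z)=-n\phi(z)-z\cdot\nabla\phi(z)\in\mathscr S$. The cancellation $\int a\,dx=0$ combined with the mean-value inequality and the Schwartz decay of $\psi$ yields $\int_0^\infty|\partial_t(\phi_t\ast a)(x)|\,dt\le C\|a\|_{L^1}\,r\,|x-x_0|^{-n-1}$ for $x\notin 2B$, so $\int_{\mathbb R^n\setminus 2B}\mathscr O(\Phi\star a)^p\,dx\le C\|a\|_{L^1}^p r^p\,r^{n-(n+1)p}$, the last integral over $|x-x_0|>2r$ converging precisely because $p>\frac n{n+1}$ and all powers of $r$ cancelling after inserting $\|a\|_{L^1}\le|B|^{1-1/p}$.

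For the counterexample I would take any $f\in\mathscr S$ with $\int f\,dx\neq0$ (a Gaussian, say), any $\phi\in\mathscr S$ with $\int\phi\,dx\neq0$, and any sequence $\{t_i\}\searrow0$. First, $M_\phi f\notin L^p$ for $p\in(0,1]$: a Schwartz function with non-vanishing integral has Fourier transform continuous and non-zero at the origin, hence does not lie in $H^p$ for $p\le1$, and by the equivalence in Theorem \ref{SchHp} (equivalently, the classical maximal characterization) $M_\phi f\in L^p\Leftrightarrow f\in H^p$. Second, $\mathscr O(\Phi\star f)\in L^p$ for \emph{every} $p>0$: the decisive observation is that a decreasing sequence $\{t_i\}\searrow0$ has a finite largest element $t_1$, so the oscillation sum only involves dilation parameters in the \emph{bounded} interval $(0,t_1]$; since $t\mapsto\phi_t\ast f(x)$ is $C^1$ this gives
\[
\mathscr O(\Phi\star f)(x)\le\Big(\sum_i(t_i-t_{i+1})^2\Big)^{1/2}\sup_{0<t\le t_1}\big|\partial_t(\phi_t\ast f)(x)\big|\le t_1\sup_{0<t\le t_1}\big|\partial_t(\phi_t\ast f)(x)\big|.
\]
Because $\int\psi\,dz=0$ automatically (the integral $\int\phi_t\,dy$ being independent of $t$), writing $\partial_t(\phi_t\ast f)(x)=t^{-1}\int\psi(z)f(x-tz)\,dz$ one sees the cancellation of $\psi$ controls this quantity as $t\to0^+$, while the rapid decay of $\phi,f\in\mathscr S$ controls it as $|x|\to\infty$ uniformly for $t\le t_1$; a routine estimate then shows $x\mapsto\sup_{0<t\le t_1}|\partial_t(\phi_t\ast f)(x)|$ decays faster than any polynomial, so $\mathscr O(\Phi\star f)$ lies in every $L^p$.

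The only genuinely nontrivial analytic ingredient is the $L^2$-boundedness of the oscillation operator used in the local estimate; the tail estimate, the atom-to-$H^p$ passage, and the whole counterexample are routine once one notices the conceptual point behind the latter, namely that a one-sided sequence $\{t_i\}\searrow0$ renders $\mathscr O(\Phi\star\,\cdot\,)$ blind to the large-$t$ behaviour of $\phi_t\ast f$ — which is exactly where $M_\phi$ detects the failure of the moment condition that governs membership in $H^p$ for $p\le1$.
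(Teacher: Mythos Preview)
Your proof of the inequality \eqref{OISchHp} is correct and essentially identical to the paper's: atomic decomposition, H\"older plus the $L^2$ bound of Theorem~\ref{OSchLp} on the local piece, and on the tail the pointwise domination of the oscillation by the $v_1$-norm $\int_0^\infty|\partial_t(\phi_t\ast a)(x)|\,dt$, followed by the mean-value/Schwartz-decay estimate giving $|x-x_0|^{-(n+1)}$. The paper phrases the tail bound as $\mathscr O\le C\mathscr V_2$ and then reuses the computation from the proof of Theorem~\ref{SchHp}, but this is the same argument.

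Your counterexample, however, takes a genuinely different route. The paper fixes $\phi(x)=f(x)=e^{-x^2}$ on $\mathbb R$ and $t_n=1/n$, computes $\phi_t\ast f(x)=(t^2+1)^{-1/2}e^{-x^2/(1+t^2)}$ explicitly, shows by calculus that $M_\phi f(x)=(2e)^{-1/2}|x|^{-1}$ for $|x|>1/\sqrt2$ (hence $M_\phi f\notin L^p$ for $p\le1$), and then verifies by a case analysis in $x$ that $\mathscr O(\Phi\star f)(x)$ is dominated by a function with Gaussian decay. You instead argue abstractly: for \emph{any} $f\in\mathscr S$ with $\int f\neq0$, any admissible $\phi$, and any $\{t_i\}\searrow0$, the moment condition forces $f\notin H^p$ (so $M_\phi f\notin L^p$) for $p\le1$, while the boundedness of $\{t_i\}$ gives $\mathscr O(\Phi\star f)(x)\le t_1\sup_{0<t\le t_1}|\partial_t(\phi_t\ast f)(x)|$, and the cancellation $\int\psi=0$ together with Schwartz decay makes this last supremum rapidly decreasing. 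Your argument is more general and, more importantly, isolates the conceptual reason the counterexample works: the oscillation operator over a sequence $t_i\searrow0$ is blind to the large-$t$ regime, which is precisely where $M_\phi$ detects the missing moment. The paper's explicit computation buys concreteness and avoids invoking the fact that $H^p$-functions (for $p\le1$) in $L^1$ have vanishing integral, but it obscures the mechanism; your approach makes the mechanism transparent at the cost of leaving the rapid-decay estimate for $\sup_{0<t\le t_1}t^{-1}|\psi_t\ast f(x)|$ as a ``routine'' exercise (it is, via the usual splitting $|w|\lessgtr|x|/(2t_1)$, but a referee might ask you to write it out).
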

We also apply the above result on variation to provide estimates
for the $\lambda$-jump operator.
\begin{theorem}\label{ljrhp}
If $\rho>2$, then the $\lambda$-jump operator $N_\lambda(\Phi\star f)$ satisfies
\begin{equation}
\|\lambda N_\lambda(\Phi\star f)^{1/\rho}\|_{L^p}\le C_{\rho}\|f\|_{H^p},\ \ \frac n{n+1}<p\le1,
\end{equation}
uniformly in $\lambda>0$. Moreover, there exists $\phi\in\mathscr S$ with $\int\phi dx\neq0$ and $f\in \mathscr S$ such that $\|\lambda N_\lambda(\Phi\star f)^{1/\rho}\|_{L^1}<\infty$ uniformly in $\lambda>0$ and $M_\phi(f)\notin L^1$.
\end{theorem}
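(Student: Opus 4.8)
Plan of proof. The first inequality is immediate from what precedes: by the contraction estimate \eqref{contr ineq} one has $\lambda N_\lambda(\Phi\star f)(x)^{1/\rho}\le C_\rho\mathscr V_\rho(\Phi\star f)(x)$ pointwise, so taking $L^p$‑norms and invoking \eqref{VISchHp} of Theorem \ref{SchHp} gives $\|\lambda N_\lambda(\Phi\star f)^{1/\rho}\|_{L^p}\le C_\rho\|f\|_{H^p}$ for $\tfrac n{n+1}<p\le1$, uniformly in $\lambda$. So the real content is the counterexample, and here is how I would build it.

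I would take $\phi$ and $f$ to be Gaussians, say $\phi(x)=\pi^{-n/2}e^{-|x|^2}$ (so $\phi\in\mathscr S$, $\phi>0$, $\int\phi=1$) and $f(x)=e^{-|x|^2}$ (so $f\in\mathscr S$, $\int f=\pi^{n/2}\ne0$). The whole point of this choice is the explicit formula obtained by a Gaussian convolution,
\[
\phi_t\ast f(x)=(1+t^2)^{-n/2}e^{-|x|^2/(1+t^2)},\qquad t>0 .
\]
Writing $u=1+t^2$, this equals $u^{-n/2}e^{-|x|^2/u}$ on $u\in(1,\infty)$, which is \emph{unimodal}: it rises to a single maximum at $u_\ast=2|x|^2/n$ (when $|x|^2>n/2$) and then falls to $0$. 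Hence $M_\phi f(x)=\sup_{t>0}|\phi_t\ast f(x)|=(n/2)^{n/2}e^{-n/2}|x|^{-n}$ for $|x|^2>n/2$, so $M_\phi f\notin L^1$; and since a unimodal (here positive) function has $v_1$‑norm at most twice its supremum, $\mathscr V_1(\Phi\star f)(x)\le 2M_\phi f(x)\le C_n(1+|x|)^{-n}$ for all $x$, the bound near the origin being trivial as $\phi_t\ast f$ is bounded there.

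The estimate for the jump operator then rests on two observations. First, every $\lambda$‑jump of $\{\phi_t\ast f(x)\}_t$ has height at most $2M_\phi f(x)$, so $N_\lambda(\Phi\star f)(x)=0$ unless $2M_\phi f(x)>\lambda$, i.e.\ unless $|x|<R_\lambda:=(2C_n/\lambda)^{1/n}$. Second, \eqref{contr ineq} with $\rho=1$ gives $N_\lambda(\Phi\star f)(x)\le C\lambda^{-1}\mathscr V_1(\Phi\star f)(x)\le C\lambda^{-1}(1+|x|)^{-n}$. Combining these, I would estimate
\[
\big\|\lambda N_\lambda(\Phi\star f)^{1/\rho}\big\|_{L^1}\le\lambda\int_{|x|<R_\lambda}\big(C\lambda^{-1}(1+|x|)^{-n}\big)^{1/\rho}dx=C^{1/\rho}\lambda^{1-1/\rho}\int_{|x|<R_\lambda}(1+|x|)^{-n/\rho}dx .
\]
Since $\rho>2>1$ forces $n/\rho<n$, the last integral is $\le C'R_\lambda^{\,n-n/\rho}=C'(2C_n/\lambda)^{1-1/\rho}$ for every $R_\lambda>0$, so the whole right‑hand side collapses to the constant $C^{1/\rho}C'(2C_n)^{1-1/\rho}$, independent of $\lambda$ (for $\lambda$ large one simply has $N_\lambda(\Phi\star f)\equiv0$). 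This yields the required uniform $L^1$ bound and finishes the construction.

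There is no deep obstacle here once the explicit formula is in hand; the one point worth stressing — and the reason the jump operator genuinely behaves better than the variation operator — is that one must \emph{not} bound $\lambda N_\lambda(\Phi\star f)^{1/\rho}$ simply by $C_\rho\mathscr V_\rho(\Phi\star f)$, since the latter is of size $(1+|x|)^{-n}$ and hence lies in $L^{1,\infty}\setminus L^1$; the convergence of the $dx$‑integral, uniformly in $\lambda$, comes precisely from the localization $\operatorname{supp}N_\lambda(\Phi\star f)\subseteq B(0,R_\lambda)$. If one preferred to avoid Gaussians, the same scheme works for any radial $\phi,f\in\mathscr S$ with nonzero integrals, replacing the explicit formula by the routine Schwartz‑tail estimate $\mathscr V_1(\Phi\star f)(x)\lesssim(1+|x|)^{-n}$ (using $\tfrac{d}{dt}\phi_t=\tfrac1t\psi_t$ with $\psi=-\operatorname{div}(\mathrm{id}\cdot\phi)$, which has mean zero); that tail estimate would then be the only laborious step.
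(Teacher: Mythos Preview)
Your proof is correct and follows essentially the same route as the paper: the positive inequality is deduced from the pointwise contraction $\lambda N_\lambda^{1/\rho}\le C_\rho\mathscr V_\rho$ together with Theorem~\ref{SchHp}, and the counterexample is the Gaussian pair $\phi,f$, for which $\phi_t\ast f(x)=(1+t^2)^{-n/2}e^{-|x|^2/(1+t^2)}$ is unimodal in $t$ and $M_\phi f(x)\asymp|x|^{-n}$ at infinity.

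The only difference is in packaging. The paper works in dimension $n=1$ and obtains the uniform $L^1$ bound by an explicit case analysis on the ranges of $\lambda$ and $|x|$, writing down piecewise upper bounds for $N_\lambda(\Phi\star f)(x)$ and integrating. Your argument works in any dimension and is organized around two clean observations: unimodality gives $\mathscr V_1(\Phi\star f)\le 2M_\phi f\lesssim(1+|x|)^{-n}$, and the support of $N_\lambda(\Phi\star f)$ is contained in $\{|x|\lesssim\lambda^{-1/n}\}$; the scaling then makes the $\lambda$-dependence cancel exactly. This is a tidier version of the same computation and has the advantage of making transparent why the jump operator beats the variation operator here (the localization of its support), which the paper's case-by-case estimates leave implicit.
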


The paper is organized as follows. In Section 2, we prove that $\rho$-variation, oscillation and $\lambda$-jump operators related to approximate identities are of strong type $(p,p)$ for $1<p<\infty$ and weak type $(1,1)$. Using the strong $L^p(p>1)$ estimates for the $\rho$-variation operator and the atom decomposition of Hardy space, we show Theorem \ref{SchHp} in Section 3. We consider the estimate for the oscillation operator associated to approximate identities acting on Hardy space and show that the oscillation is not proper to characterize Hardy spaces in Section 4. Finally, we present a jump inequality for the $\lambda$-jump operator on Hardy space and conjecture that its improvement holds, we also illustrate that the $\lambda$-jump operator can not be used to characterize $H^1$.
\section{Operators related to approximate identities on $L^p$}
To study the mapping property of variation, oscillation and $\lambda$-jump operators related to approximate identities on $H^p$, we need strong $L^p$ estimates for $1<p<\infty$. We use the argument in \cite{JSW08} and include all details for completeness although they are trivial. The following lemmas will be used later.
\begin{lemma}(\cite[Lemma 1.3]{JSW08})\label{pdN} Let $\mathscr A=\{A_t\}_{t>0}$ be a family of operators. Then
$$
\lambda\sqrt{N_\lambda(\mathscr A f)}\le C[S_2(\mathscr A f)+\lambda \sqrt{N^d_{\lambda/3}(\mathscr A f)}],
$$
where $N^d_\lambda(\mathscr A f)=N_\lambda(\{A_{2^k}f\})$ and $S_2(\mathscr Af)=(\sum_j\|\mathscr Af\|_{v_2(2^j,2^{j+1}]}^2)^{1/2}$.
\end{lemma}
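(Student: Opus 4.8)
The plan is to establish the asserted inequality pointwise in $x$, by separating the ``jumps'' witnessing $N_\lambda(\mathscr Af)(x)$ into those confined to a single dyadic block $(2^j,2^{j+1}]$, which will be absorbed into $S_2(\mathscr Af)(x)$, and those that straddle a dyadic endpoint $2^j$, which will be absorbed into the dyadic jump count $N^d_{\lambda/3}(\mathscr Af)(x)$. Fix $x$, and fix a sequence $s_1<\varepsilon_1\le s_2<\varepsilon_2\le\cdots\le s_N<\varepsilon_N$ with $|A_{\varepsilon_k}f(x)-A_{s_k}f(x)|>\lambda$ for every $k$; write $s_k\in(2^{a_k},2^{a_k+1}]$ and $\varepsilon_k\in(2^{b_k},2^{b_k+1}]$, so that $a_k\le b_k$ and both $(a_k)_k$ and $(b_k)_k$ are nondecreasing.

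First I would handle the \emph{short} indices, those with $a_k=b_k$. For a fixed block $(2^j,2^{j+1}]$, listing the short indices $k$ with $a_k=b_k=j$ in decreasing order of $k$ and reading off $\varepsilon_k,s_k$ produces a finite decreasing sequence in $(2^j,2^{j+1}]$ (here one uses $\varepsilon_k\le s_{k'}$ for $k<k'$); hence the number $m_j$ of such indices satisfies $m_j\lambda^2<\sum|A_{\varepsilon_k}f(x)-A_{s_k}f(x)|^2\le\|\{A_tf(x)\}\|_{v_2(2^j,2^{j+1}]}^2$. Summing over $j$, the total number of short indices is strictly less than $S_2(\mathscr Af)(x)^2/\lambda^2$.

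Next, for a \emph{crossing} index ($a_k<b_k$) I would use the telescoping identity
\begin{equation}\nonumber
A_{\varepsilon_k}f(x)-A_{s_k}f(x)=\big(A_{\varepsilon_k}f(x)-A_{2^{b_k}}f(x)\big)+\big(A_{2^{b_k}}f(x)-A_{2^{a_k+1}}f(x)\big)+\big(A_{2^{a_k+1}}f(x)-A_{s_k}f(x)\big),
\end{equation}
so that at least one of the three summands has modulus $>\lambda/3$. If the middle one does, then automatically $a_k+1<b_k$, and as $k$ ranges over these indices the powers of two $2^{a_k+1}<2^{b_k}$ are correctly ordered for a jump sequence of $\{A_{2^j}f\}$ -- indeed $2^{b_k}<\varepsilon_k\le s_{k'}\le 2^{a_{k'}+1}$ whenever $k<k'$ -- and each realizes a jump of height $>\lambda/3$, so there are at most $N^d_{\lambda/3}(\mathscr Af)(x)$ of them. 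If instead the first (resp.\ third) summand is the large one, it is a two-term $v_2$-increment inside the dyadic block containing $\varepsilon_k$ (resp.\ $s_k$); a short argument from $\varepsilon_k\le s_{k'}$ ($k<k'$) shows that distinct crossing indices give distinct values of $b_k$ (resp.\ of $a_k$), so summing squares over blocks bounds the number of such indices by at most a constant times $S_2(\mathscr Af)(x)^2/\lambda^2$.

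Adding the three counts gives $N_\lambda(\mathscr Af)(x)\le C\,S_2(\mathscr Af)(x)^2/\lambda^2+N^d_{\lambda/3}(\mathscr Af)(x)$ with $C$ absolute; taking square roots, multiplying through by $\lambda$, and using $\sqrt{u+v}\le\sqrt u+\sqrt v$ yields the lemma. I expect the only genuine obstacle to be the combinatorial bookkeeping of the previous paragraph -- checking that the extracted dyadic points $2^{a_k+1},2^{b_k}$ and the short pieces inside each block sit in the order needed to compute honest $v_2$-norms and honest $\lambda$-jump counts rather than mere sums over $k$ -- together with the harmless endpoint convention at $2^{b_k}$, which one disposes of by continuity of $t\mapsto A_tf(x)$ (so that $|A_{\varepsilon_k}f(x)-A_{2^{b_k}}f(x)|$ is a limit of two-term $v_2$-increments over $(2^{b_k},2^{b_k+1}]$), or simply by defining $S_2$ with closed blocks.
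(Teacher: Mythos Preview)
The paper does not supply its own proof of this lemma; it is quoted verbatim from \cite[Lemma~1.3]{JSW08} and used as a black box, so there is nothing in the paper to compare against. Your argument is essentially the standard proof of that lemma: split the jump indices into those contained in a single dyadic block and those that cross a dyadic endpoint, control the former by the short $2$-variation $S_2$, and for the latter telescope through $2^{a_k+1}$ and $2^{b_k}$ so that at least one of three pieces exceeds $\lambda/3$; the middle pieces feed into $N^d_{\lambda/3}$ while the outer pieces, being at most one per block among crossing indices, are again absorbed by $S_2$. The combinatorial ordering checks you flag (that $b_k<b_{k'}$ and $a_k<a_{k'}$ for distinct crossing indices $k<k'$, and that the extracted dyadic pairs $(2^{a_k+1},2^{b_k})$ interlace correctly) go through exactly as you indicate from $\varepsilon_k\le s_{k'}$.

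The only point that is not entirely routine is the endpoint issue you mention: the term $|A_{\varepsilon_k}f(x)-A_{2^{b_k}}f(x)|$ involves $2^{b_k}\notin(2^{b_k},2^{b_k+1}]$, so it is not literally a two-point $v_2$-increment for that block. In the present paper this is harmless because $t\mapsto\phi_t\ast f(x)$ is continuous, so one may replace $2^{b_k}$ by $2^{b_k}+\delta$ and pass to the limit, or equivalently note that $\|\cdot\|_{v_2(2^j,2^{j+1}]}=\|\cdot\|_{v_2[2^j,2^{j+1}]}$ under continuity; your proposed fixes are adequate. With that caveat, the proposal is correct and matches the argument in the cited reference.
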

Let $\sigma$ be a compactly supported finite Borel measure and satisfying
\begin{equation}\label{fts}
|\hat{\sigma}(\xi)|\le C|\xi|^{-b}, \ \ for\ some\ b>0.
\end{equation}
 $\sigma_t$ is given by $<\sigma_t,f>=\int f(tx)d\sigma$.
\begin{lemma}(\cite[Theorem 1.1,\ Lemma 6.1]{JSW08})\label{NdS2}
Let $\mathfrak U=\{\mathfrak U_k\}$ where $\mathfrak U_kf=f\ast\sigma_{2^k}$. If $\sigma$ satisfies \eqref{fts}, then
$$
\|\lambda\sqrt{N_\lambda(\mathfrak U f)}\|_{L^p}\le C_p\|f\|_{L^p}
$$
uniformly in $\lambda>0$. Moreover, let $\mathscr A=\{A_t\}_{t>0}$ where $A_tf=f\ast\sigma_t$. If $\sigma$ satisfies \eqref{fts} for some $b>1/2$, then
$$
\|S_2(\mathscr Af)\|_{L^p}\le C_p\|f\|_{L^p}
$$
holds for $\min\{2n/(n+2b-1),(2b+1)/2b\}<p<\max\{2n/(n-2b-1),2b+1\}$.
\end{lemma}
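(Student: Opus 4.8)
The plan is to obtain both estimates from $L^2$ square-function bounds after a Littlewood--Paley decomposition adapted to the dilation parameter, in which the hypothesis \eqref{fts} enters only through a geometric gain in an off-diagonal index. Fix a radial $\psi$ with $\widehat\psi$ supported in $\{1/2\le|\xi|\le 2\}$ and $\sum_{l\ge 0}\widehat\psi(2^{-l}\xi)=1$ for $|\xi|\ge 1$, and set $\widehat{\sigma^{(l)}_t}(\xi)=\widehat\sigma(t\xi)\,\widehat\psi(2^{-l}t\xi)$ for $l\ge 0$, so that $\sigma_t=\sigma^{\mathrm{lo}}_t+\sum_{l\ge 0}\sigma^{(l)}_t$ where $\widehat{\sigma^{\mathrm{lo}}_t}(\xi)=\widehat\sigma(t\xi)\big(1-\sum_{l\ge 0}\widehat\psi(2^{-l}t\xi)\big)$ is supported in $|t\xi|\lesssim 1$. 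Since $\widehat\sigma$ is smooth near the origin, $\sigma^{\mathrm{lo}}=\sigma\ast\Psi$ for a fixed Schwartz $\Psi$, so $\{f\ast\sigma^{\mathrm{lo}}_t\}_{t>0}$ is an ordinary smooth dilation family, for which the jump and short-variation inequalities on $L^p$ ($1<p<\infty$) and the corresponding weak $(1,1)$ bounds are classical; and on the support of $\widehat{\sigma^{(l)}_t}$ one has $|t\xi|\sim 2^l$, so \eqref{fts} gives $\|\widehat{\sigma^{(l)}_t}\|_\infty\le C2^{-bl}$. It therefore suffices to prove each estimate for the family $\{f\ast\sigma^{(l)}_t\}_{t>0}$ with a bound of the form $C2^{-\delta l}\|f\|_{L^p}$, uniformly in $l\ge 0$, and to sum in $l$.

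For the jump inequality on the lacunary family, fix $l$ and put $d_k=f\ast\sigma^{(l)}_{2^k}$. Since $\widehat{\sigma^{(l)}_{2^k}}$ lives in the annulus $|\xi|\sim 2^{l-k}$, the sequence $(d_k(x))_k$ tends to $0$ as $k\to\pm\infty$ for $f$ in a dense class; and for any scalar sequence converging to $0$ at both ends, every $\lambda$-jump consumes a fresh index with $|d_k|>\lambda/2$, whence $\lambda^2 N_\lambda\big((d_k)_k\big)\le 4\sum_k|d_k|^2$ pointwise. This reduces matters to the Littlewood--Paley square function $\big(\sum_k|d_k(x)|^2\big)^{1/2}$: only $O(1)$ of the annuli overlap at each frequency and $\|\widehat{\sigma^{(l)}_{2^k}}\|_\infty\le C2^{-bl}$, so Plancherel gives the $L^2$ bound $C2^{-bl}\|f\|_2$, and the vector-valued Calder\'on--Zygmund/Littlewood--Paley machinery upgrades this to strong type $(p,p)$ for $1<p<\infty$ and weak type $(1,1)$ with at most a polynomial-in-$l$ loss, which $2^{-bl}$ absorbs. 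Summing in $l$ and adding the classical low-frequency piece proves the first estimate; only $b>0$ is used, which is consistent with the hypothesis there.

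For the short-variation estimate one decomposes each block $(2^j,2^{j+1}]$ into nested dyadic sub-intervals and applies the L\'epingle/Rademacher--Menshov inequality: $\|\{A_tf(x)\}\|_{v_2(2^j,2^{j+1}]}\le C\sum_{m\ge 0}\big(\sum_{k=1}^{2^m}|A_{t^j_{m,k}}f(x)-A_{t^j_{m,k-1}}f(x)|^2\big)^{1/2}$, where $t^j_{m,k}$ are $2^m$ equally spaced points of $(2^j,2^{j+1}]$, so that $S_2(\mathscr Af)(x)\le C\sum_{m\ge 0}D_m(f)(x)$ with $D_m(f)=\big(\sum_{j,k}|f\ast(\sigma_{t^j_{m,k}}-\sigma_{t^j_{m,k-1}})|^2\big)^{1/2}$ a square function of $2^{-m}$-smoothed finite differences; the $L^2$ bound for $D_m$ follows from Plancherel and \eqref{fts}. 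The $L^p$ bounds come from organizing $\sum_m D_m$ as a $g_\lambda^{\ast}$-type square function whose weight exponent is $\lambda=1+(2b-1)/n$, since the difference kernels at scale $t$ obey weighted $L^2$ bounds with exactly that much off-diagonal decay; Stein's $g_\lambda^{\ast}$ theory then gives the $L^p$ bound for all $p\in[2,\infty)$ when $\lambda>1$ (i.e. $b>1/2$, which is also what makes the square function meaningful) and for $p>2/\lambda=2n/(n+2b-1)$ when $p\le 2$, and interpolating this against the $L^2$ identity together with the dual estimate produces precisely $\min\{2n/(n+2b-1),(2b+1)/2b\}<p<\max\{2n/(n-2b-1),2b+1\}$. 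Summing in $m$ and in the frequency offset $l$ completes the proof.

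The main obstacle is the short-variation bound. Because $\sigma$, $\sigma_t$ and the difference measures $\sigma_t-\sigma_{t'}$ are genuinely rough, with no usable pointwise kernel, one cannot invoke classical Calder\'on--Zygmund pointwise estimates; everything must be squeezed out of Fourier-side $L^2$ identities and weighted $L^2$ kernel bounds, with the single scalar input \eqref{fts}, which is what forces the Littlewood--Paley-in-scale device and the resumming in $l$. The delicate step is to carry out the L\'epingle decomposition and the passage to a $g_\lambda^{\ast}$ function so that the surviving weight exponent $\lambda=1+(2b-1)/n$ is \emph{sharp} — so that one reaches the stated endpoints of the $p$-interval rather than a strict subinterval — and the restriction $b>1/2$ is exactly the threshold $\lambda>1$ below which this argument collapses.
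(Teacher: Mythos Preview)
The paper does not supply a proof of this lemma; it is quoted as a black box from \cite{JSW08} (Theorem~1.1 and Lemma~6.1), so there is no in-paper argument to compare against. I can still assess your sketch on its own terms and against the JSW argument.

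For the dyadic jump estimate there is a real gap. Your pointwise inequality $\lambda^2 N_\lambda\big((d_k)\big)\le 4\sum_k|d_k|^2$ is correct for null sequences, and for each high-frequency piece $\sigma^{(l)}$ it does reduce matters to a Littlewood--Paley square function with the gain $2^{-bl}$. The trouble is the low-frequency piece $\sigma^{\mathrm{lo}}=\sigma\ast\Psi$: it has $\widehat{\sigma^{\mathrm{lo}}}(0)=\widehat\sigma(0)\neq 0$, so $\big(\sum_k|f\ast\sigma^{\mathrm{lo}}_{2^k}|^2\big)^{1/2}$ diverges and your square-function trick is unavailable there. You dismiss this case as ``classical'', but the jump inequality for a smooth dilation family with nonzero mean is exactly the deep part of the theorem: even for the Poisson kernel it requires L\'epingle's variational inequality for martingales (or Bourgain's transference of it), which is precisely the input your argument otherwise avoids. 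The JSW proof confronts this head-on by comparing $f\ast\sigma_{2^k}$ with the dyadic conditional expectations $\mathbb E_kf$ and invoking the martingale jump inequality for $\{\mathbb E_kf\}$; the remainder $f\ast\sigma_{2^k}-\mathbb E_kf$ is then handled by a square function much as you do. So your decomposition is a legitimate variant of theirs (smooth main term instead of martingale main term), but you have relocated, not removed, the essential L\'epingle input. A secondary issue: $N_\lambda$ is not subadditive, so ``summing in $l$'' after estimating $N_\lambda(\{f\ast\sigma^{(l)}_{2^k}\})$ for each $l$ separately is not directly valid; you must either bound the full square function first and only then split it, or organize the sum via $N_\lambda(a+b)\le N_{\lambda/2}(a)+N_{\lambda/2}(b)$.

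Your outline for $S_2$ --- a L\'epingle/Rademacher--Menshov decomposition of each dyadic block into $2^m$ sub-intervals, producing square functions that are then controlled by a $g^{\ast}_\lambda$-type estimate with $\lambda=1+(2b-1)/n$ --- is indeed the architecture of \cite[\S6]{JSW08}, and the threshold $b>1/2$ appears for the reason you give. The precise derivation of \emph{both} endpoints in the stated $p$-range (one from $g^{\ast}_\lambda$ theory, the other from a separate interpolation against an $L^\infty\to\mathrm{BMO}$ or dual bound) is, however, more delicate than your final sentence indicates.
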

\begin{lemma}(\cite[Theorem 1.1]{JSW08})\label{Ndw11}
If $\phi$ satisfies
\begin{equation}
\int_{\mathbb R^n}|\phi(x+y)-\phi(x)|dx\le C|y|^{-b}
\end{equation}
for some $b>0$, then for any $\alpha>0$ we have
\begin{equation}
|\{x:\lambda\sqrt{N_\lambda^d(\Phi\star f)(x)}>\alpha\}|\le \frac C\alpha\|f\|_{L^1}
\end{equation}
uniformly in $\lambda>0$.
\end{lemma}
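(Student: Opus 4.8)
The plan is to follow the standard Calder\'on--Zygmund argument of \cite{JSW08}. Fix $\alpha>0$ and $\lambda>0$; the goal is to bound $|\{x:\lambda\sqrt{N_\lambda^d(\Phi\star f)(x)}>\alpha\}|$ by $C\alpha^{-1}\|f\|_{L^1}$ with $C$ independent of $\lambda$. Decompose $f=g+b$ at height $\alpha$, so that $\|g\|_{L^\infty}\lesssim\alpha$, $\|g\|_{L^1}\le\|f\|_{L^1}$, and $b=\sum_Q b_Q$ with the cubes $Q$ pairwise disjoint and dyadic, $\operatorname{supp}b_Q\subseteq Q$, $\int b_Q=0$, $\|b_Q\|_{L^1}\lesssim\alpha|Q|$ and $\sum_Q|Q|\lesssim\alpha^{-1}\|f\|_{L^1}$. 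If a $\lambda$-jump of $\Phi\star f$ occurs on a dyadic interval, then on it a $\tfrac\lambda2$-jump of $\Phi\star g$ or of $\Phi\star b$ occurs; splitting a jump configuration accordingly gives $N_\lambda^d(\Phi\star f)\le N_{\lambda/2}^d(\Phi\star g)+N_{\lambda/2}^d(\Phi\star b)$ pointwise, so (using $\sqrt{s+t}\le\sqrt s+\sqrt t$) it suffices to handle $g$ and $b$ separately.

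For the good part I would use the $L^2$ estimate: by the first part of Lemma~\ref{NdS2} with $\sigma=\phi\,dx$ and $p=2$ (the rapid decay of $\widehat\phi$ providing \eqref{fts}, and the non-compact support being dispatched by splitting $\phi$ into rapidly decaying, compactly supported pieces), $\|\lambda\sqrt{N_\lambda^d(\Phi\star g)}\|_{L^2}\lesssim\|g\|_{L^2}$ uniformly in $\lambda$. Since $\|g\|_{L^2}^2\le\|g\|_{L^\infty}\|g\|_{L^1}\lesssim\alpha\|f\|_{L^1}$, Chebyshev yields $|\{x:\lambda\sqrt{N_{\lambda/2}^d(\Phi\star g)(x)}>\alpha/3\}|\lesssim\alpha^{-2}\|g\|_{L^2}^2\lesssim\alpha^{-1}\|f\|_{L^1}$.

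For the bad part, put $\Omega^*=\bigcup_Q Q^*$ with $Q^*$ the cube concentric with $Q$ of side $100\sqrt n\,\ell(Q)$, so $|\Omega^*|\lesssim\sum_Q|Q|\lesssim\alpha^{-1}\|f\|_{L^1}$ and it remains to control $\lambda\sqrt{N_{\lambda/2}^d(\Phi\star b)}$ on $(\Omega^*)^c$. The crux is the elementary pointwise bound, valid for any sequence $\{F_k\}_{k\in\mathbb Z}$,
\[
\lambda^2 N_\lambda(\{F_k\})\le 2\Big(\sup_k|F_k|\Big)\sum_k|F_{k+1}-F_k|,
\]
which follows by writing each $\lambda$-jump as a telescoping sum over disjoint index intervals $[s_i,\varepsilon_i)$ and using $|F_{\varepsilon_i}-F_{s_i}|^2\le(2\sup_k|F_k|)\sum_{k=s_i}^{\varepsilon_i-1}|F_{k+1}-F_k|$. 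With $F_k=\phi_{2^k}\ast b(x)$ and $\Psi=2^{-n}\phi(\cdot/2)-\phi$ (so $F_{k+1}-F_k=\Psi_{2^k}\ast b(x)$ and $\int\Psi=0$), this gives $\lambda^2N_{\lambda/2}^d(\Phi\star b)(x)\le 8\,\bigl(\sup_k|\phi_{2^k}\ast b(x)|\bigr)\sum_k|\Psi_{2^k}\ast b(x)|$. Both factors are then estimated on $(\Omega^*)^c$ by the usual cancellation argument: for $x\notin Q^*$, using $\int b_Q=0$ with the smoothness of $\phi$ (resp. $\Psi$) on the scales $2^k\gtrsim\ell(Q)$ and the mere decay of the kernel on the scales $2^k\lesssim\ell(Q)$ — each resulting series being geometric — one obtains $\sup_k|\phi_{2^k}\ast b_Q(x)|+\sum_k|\Psi_{2^k}\ast b_Q(x)|\lesssim\|b_Q\|_{L^1}\,\omega_Q(x)$ with $\omega_Q(x)=\ell(Q)(\ell(Q)+\operatorname{dist}(x,Q))^{-(n+1)}$ and $\int_{\mathbb R^n}\omega_Q\lesssim1$. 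Summing over $Q$ and setting $H=\sum_Q\|b_Q\|_{L^1}\omega_Q$, we get $\sup_k|\phi_{2^k}\ast b|\le H$ and $\sum_k|\Psi_{2^k}\ast b|\le H$ on $(\Omega^*)^c$, hence $\lambda\sqrt{N_{\lambda/2}^d(\Phi\star b)(x)}\le\sqrt8\,H(x)$ there, while $\|H\|_{L^1((\Omega^*)^c)}\lesssim\sum_Q\|b_Q\|_{L^1}\lesssim\alpha\sum_Q|Q|\lesssim\|f\|_{L^1}$; Chebyshev then closes the bad-part estimate, uniformly in $\lambda$.

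I expect the bad part to be the only real obstacle. The crude bound $\lambda N_\lambda^d(F)\le\|F\|_{v_1}$ would cost a spurious factor $\lambda/\alpha$ and destroy uniformity in $\lambda$; one must exploit the square root through the product inequality $\lambda^2N_\lambda\le 2\|F\|_{L^\infty}\|F\|_{v_1}$, which lets the maximal piece $\sup_k|\phi_{2^k}\ast b|$ and the short-sum piece $\sum_k|\Psi_{2^k}\ast b|$ each enter linearly, so that standard $L^1$ cancellation estimates apply to both. The regularity hypothesis on $\phi$ is used exactly in the large-scale cancellation estimate, and the decay of $\phi$ (automatic for $\phi\in\mathscr S$) in the small-scale one.
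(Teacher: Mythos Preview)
The paper does not supply a proof of this lemma; it is quoted from \cite[Theorem~1.1]{JSW08} and used as a black box, so there is no in-paper argument to compare against. Your Calder\'on--Zygmund outline is correct and is in the spirit of \cite{JSW08}. The pointwise product bound $\lambda^2 N_\lambda(\{F_k\})\le 2\bigl(\sup_k|F_k|\bigr)\sum_k|F_{k+1}-F_k|$ for the bad part is a clean device: it reduces the weak $(1,1)$ estimate for $N_\lambda^d$ to the standard off-diagonal $L^1$ bounds for the maximal function and for the short-difference sum, both of which follow from the usual cancellation/decay argument exactly as you sketch.

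Two small caveats. First, you invoke Lemma~\ref{NdS2} for the $L^2$ bound on the good part, but that lemma is stated for compactly supported $\sigma$, and your proposed fix (``splitting $\phi$ into compactly supported pieces'') is awkward because $N_\lambda$ is not sublinear; it is cleaner to note that the $L^1$-smoothness hypothesis on $\phi$ already forces $|\widehat\phi(\xi)|\lesssim|\xi|^{-b}$, and that the martingale-comparison proof of the $L^2$ jump bound in \cite{JSW08} does not actually use compact support. Second, your bad-part kernel estimates use pointwise decay and smoothness of $\phi$ that go beyond the stated $L^1$ hypothesis; this is harmless in context since the paper only applies the lemma with $\phi\in\mathscr S$, but for the lemma as stated one would replace the pointwise mean-value estimates by their integrated analogues.
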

 We now state a theorem on the $\lambda$-jump operator associated with approximate identities.
\begin{theorem}\label{ljphip}
Let $\phi\in\mathscr S$ with $\int\phi dx\neq0$. For $1<p<\infty$, there exists a positive constant $C_p$ such that
\begin{equation}\label{JISchp}
\|\lambda\sqrt{N_\lambda(\Phi\star f)}\|_{L^p}\le C_p\|f\|_{L^p},
\end{equation}
 uniformly in $\lambda>0$. Moreover, for any $\alpha>0$,
\begin{equation}\label{JISch1}
|\{x:\lambda\sqrt{N_\lambda(\Phi\star f)(x)}>\alpha\}|\le \frac C\alpha\|f\|_{L^1},
\end{equation}
uniformly in $\lambda>0$.
\end{theorem}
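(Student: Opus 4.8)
The plan is to reduce everything to the three lemmas already stated and the classical jump inequality for dyadic martingales, following the splitting philosophy of \cite{JSW08}. The starting point is Lemma \ref{pdN}, which bounds $\lambda\sqrt{N_\lambda(\Phi\star f)}$ by $S_2(\Phi\star f)$ plus $\lambda\sqrt{N^d_{\lambda/3}(\Phi\star f)}$; thus it suffices to control each of these two pieces on $L^p$ for $1<p<\infty$ and to establish their weak-$(1,1)$ bounds. The dyadic jump term $\lambda\sqrt{N^d_{\lambda/3}(\Phi\star f)}$ is handled directly: for $1<p<\infty$ it is the content of Lemma \ref{NdS2} applied with $\sigma$ the measure $\phi(x)\,dx$ (which is compactly supported only after a Schwartz-tail decomposition—see below—but satisfies the Fourier decay \eqref{fts} for every $b>0$ since $\phi\in\mathscr S$), and the weak $(1,1)$ bound is exactly Lemma \ref{Ndw11}, whose hypothesis $\int|\phi(x+y)-\phi(x)|\,dx\le C|y|^{-b}$ is immediate for Schwartz $\phi$ (indeed it holds with, say, $b=1$, using $|\nabla\phi|\in L^1$ for $|y|\le1$ and the rapid decay of $\phi$ for $|y|\ge1$).

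The main work is the short-variation term $S_2(\Phi\star f)=(\sum_j\|\Phi\star f\|_{v_2(2^j,2^{j+1}]}^2)^{1/2}$. For the $L^p$ bound with $1<p<\infty$ I would invoke the second half of Lemma \ref{NdS2}, but that lemma requires Fourier decay with exponent $b>1/2$ and, more importantly, only yields the range $\min\{2n/(n+2b-1),(2b+1)/2b\}<p<\max\{2n/(n-2b-1),2b+1\}$. Since $\phi\in\mathscr S$ we may take $b$ as large as we wish, and letting $b\to\infty$ this range opens up to all of $(1,\infty)$; concretely, given any $p\in(1,\infty)$ we fix one $b=b(p,n)$ large enough that $p$ lies in the stated interval, and apply the lemma with that $b$. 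This requires $\sigma$ to be a compactly supported finite Borel measure, so I would first split $\phi=\phi_0+\phi_1$ with $\phi_0\in C_c^\infty$ and $\|\phi_1\|_{\mathscr S}$ arbitrarily small (small enough, after also arranging $\int\phi_1\,dx$ small, that the corresponding maximal and variation operators for $\phi_1$ have small $L^p$ norm by a crude absolutely-convergent-tail estimate, e.g.\ bounding $\mathscr V_2(\Phi_1\star f)$ pointwise by $C\|\phi_1\|_{(n+1,0)}Mf$ via the standard $t$-derivative/radial-majorant argument). The compactly supported piece $\phi_0\,dx$ then meets the hypotheses of Lemma \ref{NdS2} with every $b>0$.

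For the weak $(1,1)$ estimate of $S_2(\Phi\star f)$, I would combine the $L^p$ boundedness just obtained (for one fixed $p$, say $p=2$) with a Calderón–Zygmund decomposition: write $f=g+\sum_k b_k$ at height $\alpha$, handle $g$ by the $L^2$ bound, and for the bad part use that $S_2$ is built from differences $\phi_t\ast b_k-\phi_s\ast b_k$ with $2^j<s<t\le2^{j+1}$, which enjoy the usual kernel-regularity cancellation against the mean-zero $b_k$; summing the $\ell^2(j)$ norms of these contributions over the shifted cubes gives the $C\alpha^{-1}\|f\|_{L^1}$ bound. Finally, putting the two pieces together via Lemma \ref{pdN} gives \eqref{JISchp} for $1<p<\infty$ and, since the weak-$(1,1)$ bounds of both pieces were established, \eqref{JISch1}. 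The step I expect to be the main obstacle is making the $L^p$ range in Lemma \ref{NdS2} cover all of $(1,\infty)$ cleanly: one must check that choosing $b=b(p)$ large is legitimate, i.e.\ that the compactly supported mollification $\phi_0$ can be taken once and for all (independent of $p$) while the decay exponent $b$—which is essentially unbounded for $C_c^\infty$ functions—is selected per $p$, and that the error term from $\phi_1$ is controlled on the same $L^p$ uniformly. This is routine but is where the Schwartz hypothesis is genuinely used.
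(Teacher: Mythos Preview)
Your proposal follows essentially the same route as the paper: split via Lemma~\ref{pdN} into the short-variation piece $S_2(\Phi\star f)$ and the dyadic jump piece $N^d_{\lambda/3}(\Phi\star f)$; handle the latter on $L^p$ by Lemma~\ref{NdS2} and its weak $(1,1)$ by Lemma~\ref{Ndw11}; handle $S_2$ on $L^p$ by the second half of Lemma~\ref{NdS2}; and obtain the weak $(1,1)$ for $S_2$ by a Calder\'on--Zygmund decomposition, using the $L^2$ bound on the good part and kernel regularity on the bad part. The paper carries out exactly this Calder\'on--Zygmund argument, bounding $\|\Phi(x-y)-\Phi(x-x_j)\|_{v_2(2^i,2^{i+1}]}$ via the $v_1$-norm and the $t$-derivative of $\phi_t$, and then summing in $i$ and $j$.

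Two small points where you diverge from the paper. First, the paper applies Lemma~\ref{NdS2} directly to $\phi$ without addressing the compact-support hypothesis on $\sigma$ (the underlying results in \cite{JSW08} do not actually require it); your $\phi=\phi_0+\phi_1$ decomposition is a correct way to make this rigorous as stated, but the paper does not do it. Second, your concern about the $L^p$ range is unnecessary: the paper simply takes $b=(n+1)/2$, for which the interval in Lemma~\ref{NdS2} already equals $(1,\infty)$ (the endpoint $2n/(n-2b-1)$ is read as $+\infty$ once $2b+1>n$, and the lower endpoint becomes~$1$). So no $p$-dependent choice of $b$, and hence no uniformity issue for the tail $\phi_1$, is needed.
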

\begin{proof}
Clearly,
$\hat{\phi}\in\mathscr S$, which means that for any $N\in\mathbb N$ there is a $C_N$ such that
$$
|\hat{\phi}(\xi)|\le C_{N}(1+|\xi|)^{-N}.
$$
Therefore $|\hat{\phi}(\xi)|\le C_{n}|\xi|^{-\frac{n+1}2}$. By Lemma \ref{NdS2}, we have
\begin{equation}
\|\lambda\sqrt{N_\lambda^d(\Phi\star f)}\|_{L^p}\le C_p\|f\|_{L^p}\ \ \text{and}\ \ \|S_2(\Phi\star f)\|_{L^p}\le C_p\|f\|_{L^p},\ \ 1<p<\infty.
\end{equation}
By Lemma \ref{pdN}, we get \eqref{JISchp}. We turn to the proof of \eqref{JISch1}. Lemma \ref{pdN}  and Lemma \ref{Ndw11} imply that it suffices to prove
\begin{align}\label{S2w1}
|\{x:S_2(\Phi\star f)(x)>1\}|\le C\|f\|_{L^1}.
\end{align}
We perform the Calder\'{o}n-Zygmund decomposition of $f$ at height $1$ and write $f=g+b$. We just need to establish the following two estimates:
\begin{equation}\label{sg11}
|\{x:S_2(\Phi\star g)(x)>1/2\}|\le C
\|f\|_{L^1},
\end{equation}
and
\begin{equation}\label{sb11}
|\{x:S_2(\Phi\star b)(x)>1/2\}|\le C
\|f\|_{L^1}.
\end{equation}
 As usual the known $L^p$ bounds for $S_2$ allows us to obtain (\ref{sg11}):
\begin{align*}
|\{x:S_2(\Phi\star g)(x)>1/2\}|&\le C\int_{\mathbb R^n}|S_2(\Phi\star g)(x)|^2dx\\
&\le C \int_{\mathbb R^n}|g(x)|^2dx\le C\|f\|_{L^1}.
\end{align*}
\par
To show \eqref{sb11}, we write $b=\sum_jb_j$ precisely, where each $b_j$ is supported in a dyadic cube $Q_j$. We denote by $l(Q_j)$ the side length of $Q_j$.
Let $\tilde{Q}_j$ be the cube with sides parallel to the axes having the same center as $Q_j$ and having side length $8l(Q_j)$, write $\tilde{Q}=\bigcup \tilde{Q}_j$. Obviously,
\begin{align*}
|\tilde{Q}|\le\sum_j |\tilde{Q}_j|\le C\sum_j |Q_j|\le C\|f\|_{L^1}.
\end{align*}
We still need to prove that
\begin{align*}\label{wsbc11}
|\{x\in\tilde{Q}^c:S_2(\Phi\star b)(x)>1/2\}|\le C\|f\|_{L^1}.
\end{align*}
Let $x_j$ be the center of $Q_j$. Using the moment condition of $b_j$ and H\"{o}lder's inequality, we get
\begin{align*}
\|\Phi\star b(x)\|_{v_2(2^i,2^{i+1}]}\le\sum_j\int_{Q_j}|b_j(y)|\|\Phi(x-y)-\Phi(x-x_j)\|_{v_2(2^i,2^{i+1}]}dy.
\end{align*}
For $x\in \tilde{Q}^c$ and $x_j,y\in Q_j$, it is clear that $|x-x_j-\theta (y-x_j)|\sim |x-x_j|$ for any $\theta\in [0,1]$. Then,
\begin{align*}
\|\Phi(x-y)-\Phi(x-x_j)\|_{v_2(2^i,2^{i+1}]}
&\le \|\Phi(x-y)-\Phi(x-x_j)\|_{v_1(2^i,2^{i+1}]}\\
&\le \int_{2^i}^{2^{i+1}}|\frac d{dt}[\phi_t(x-y)-\phi_t(x-x_j)]|dt\\
&\le C|y-x_j|\int_{2^i}^{2^{i+1}}\frac{1}{t^{n+2}}\big(1+\frac{|x-x_j|}t)^{-(n+2)}dt,
\end{align*}
where we use the following well-known fact
\begin{equation}\label{vr1i}
\|\mathfrak a\|_{v_\rho}\le \|\mathfrak a\|_{v_1}\le \int_0^\infty|\mathfrak a'(t)|dt,
\end{equation}
see (39) in \cite{JSW08}. Consequently,
\begin{equation}\label{eoV2i}
\|\Phi(x-y)-\Phi(x-x_j)\|_{v_2(2^i,2^{i+1}]}\le Cl(Q_j)\min\{2^{i+1}|x-x_j|^{-n-2}, 2^{-i(n+1)}\}.
\end{equation}
Finally, by \eqref{eoV2i} and H\"{o}lder's inequality,
\begin{align*}
|\{x\in\tilde{Q}^c:S_2(\Phi\star b)(x)>1/2\}|&\le C
\int_{\tilde{Q}^c}(\sum_{i\in\mathbb Z}\|\Phi\star b(x)\|_{v_2(2^i,2^{i+1}]}^2)^{\frac12}dx\\
&\le C\int_{\tilde{Q}^c}\sum_j\int_{Q_j}|b_j(y)|[\sum_{i\in\mathbb Z}\|\Phi(x-y)-\Phi(x-x_j)\|_{v_2(2^i,2^{i+1}]}^2]^{\frac12}dydx\\
&\le C\sum_jl(Q_j)\int_{Q_j}|b_j(y)|dy\int_{\tilde{Q_j}^c}\frac{dx}{|x-x_j|^{n+1}}\\
&\le C\|f\|_{L^1}.
\end{align*}
This completes the proof of Theorem \ref{ljphip}.
\end{proof}
To obtain the variational inequality, we present a lemma reducing variational inequalities to jump inequalities, which is a generalization of Bourgain's argument in \cite{Bou89}.
\begin{lemma}(\cite[Lemma 2.1]{JSW08})\label{jcv}
Suppose that $p_0<q<p_1$ and that for $p_0<p<p_1$ the inequality
$$
\sup_{\lambda>0}\|\lambda[N_\lambda(\mathcal Tf)]^{1/q}\|_{L^p}\le C\|f\|_{L^p}
$$
holds for all $f$ in $L^p$. Then we have for $q<\rho$,
$$
\|\mathscr V_\rho(\mathcal Tf)\|_{L^p}\le C(p,\rho)\|f\|_{L^p}
$$
for $f\in L^p$, $p_0<p<p_1$.
\end{lemma}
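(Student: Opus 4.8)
The plan is to deduce the $\rho$-variation bound from the jump hypothesis by a generalization of Bourgain's argument (compare \cite{Bou89} and the reference \cite{JSW08}), in two stages: a pointwise passage from $\rho$-variation to dyadic jump counts, followed by an $L^p$ estimate of the resulting sum that crucially exploits the strict gap $\rho>q$.

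First I would establish the pointwise inequality
\begin{equation*}
\mathscr V_\rho(\mathcal F)(x)^\rho\le C_\rho\sum_{k\in\mathbb Z}2^{k\rho}N_{2^k}(\mathcal F)(x),
\end{equation*}
valid for any family $\mathcal F=\{F_t\}$ and any $\rho\ge1$. To see this, take a finite decreasing sequence almost realizing $\mathscr V_\rho(\mathcal F)(x)$ and group its consecutive increments by dyadic size: at most $N_{2^{k-1}}(\mathcal F)(x)$ of them have modulus in $[2^k,2^{k+1})$, since those increments sit on pairwise disjoint adjacent subintervals and each is a jump of height $>2^{k-1}$; summing $\rho$-th powers and reindexing gives the claim (the right-hand side is comparable to $\sum_{m\ge1}(\text{$m$-th largest jump of }\mathcal F\text{ at }x)^\rho$). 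I would also record the elementary facts that $N_{2^k}(\mathcal Tf)(x)=0$ once $2^k$ exceeds the oscillation of $t\mapsto A_tf(x)$, and that this oscillation is itself $\le\mathscr V_\rho(\mathcal Tf)(x)$: this self-improvement is what makes the weight $2^{k(\rho-q)}$ summable below.

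Next, raising the pointwise inequality to the power $1/\rho$ (with $\mathcal F=\Phi\star f$, resp. $\{A_tf\}$) and taking $L^p$ norms, it remains to bound $\big\|\sum_k 2^{k\rho}N_{2^k}(\mathcal Tf)\big\|_{L^{p/\rho}}$ by $C\|f\|_p^\rho$. Here I would write $2^{k\rho}N_{2^k}=2^{k(\rho-q)}\big(2^kN_{2^k}^{1/q}\big)^q$ and use the hypothesis, which for every $p'\in(p_0,p_1)$ gives $\|N_{2^k}(\mathcal Tf)\|_{L^{p'/q}}\le C_{p'}^q2^{-kq}\|f\|_{p'}^q$ uniformly in $k$ (so that the sets $\{2^k<\operatorname{osc}(\mathcal Tf)\}$ shrink geometrically in $k$): decompose $\mathbb R^n$ according to the dyadic size of $\operatorname{osc}(\mathcal Tf)$, estimate each piece by Hölder's inequality with these uniform bounds, sum the geometric series in $k$ (this is where $\rho>q$ is essential), and finally sum over the size parameter, choosing the auxiliary exponent $p'$ in a neighbourhood of $p$ as the bookkeeping requires. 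This is carried out first for $f$ in the dense class $L^{p_0}\cap L^{p_1}$; the full range $p\in(p_0,p_1)$ is then reached by interpolating the estimates just obtained for nearby $p'$ (which is precisely why the hypothesis is assumed on the whole interval), and the passage to arbitrary $f\in L^p$ is a routine density/Fatou argument, using that $f\mapsto\mathscr V_\rho(\mathcal Tf)$ is sublinear.

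The main obstacle is this last stage: converting a bound that is only \emph{uniform in $\lambda$} for the single-level jump-counting functions into a bound for the full sum over dyadic levels. A naive term-by-term estimate diverges; the content of Bourgain's trick is that the gap $\rho>q$ yields a convergent geometric series once one also invokes the self-improvement $\operatorname{osc}(\mathcal Tf)\le\mathscr V_\rho(\mathcal Tf)$ and interpolates over the exponent. Everything else — the pointwise inequality and the decomposition bookkeeping — is routine.
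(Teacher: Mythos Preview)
The paper does not prove this lemma at all; it is quoted verbatim from \cite[Lemma~2.1]{JSW08} and applied as a black box to deduce Theorem~\ref{SchLp}. So there is no proof in the paper against which to compare your proposal.

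That said, your sketch is a faithful outline of the Bourgain--Jones--Seeger--Wright argument the citation points to. The pointwise inequality
\[
\mathscr V_\rho(\mathcal F)(x)^\rho\le C_\rho\sum_{k\in\mathbb Z}2^{k\rho}N_{2^k}(\mathcal F)(x)
\]
is correct and your justification (grouping increments by dyadic size) is the standard one. You have also correctly isolated the real issue: the single-level hypothesis gives only a uniform-in-$\lambda$ bound, and passing to the sum over dyadic levels requires both the gap $\rho>q$ (to get a convergent geometric factor $2^{k(\rho-q)}$) and the hypothesis for an \emph{open range} of exponents around $p$ (so that one may interpolate between two auxiliary exponents $p_0'<p<p_1'$). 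One quibble: the ``self-improvement'' $\operatorname{osc}(\mathcal Tf)\le\mathscr V_\rho(\mathcal Tf)$ you invoke is a triviality and not what closes the loop; the substantive input is rather the monotonicity $N_{2^{k+1}}\le N_{2^k}$ together with the weak-type decay of the level sets $\{N_{2^k}\ge1\}$ that already follows from the hypothesis. The ``bookkeeping'' you gesture at is genuinely the content of the proof and would have to be written out (the actual \cite{JSW08} argument handles it by a real-interpolation step rather than a literal level-set decomposition of $\operatorname{osc}$), but the architecture you describe is right.
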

As a result of above lemma, the following variational inequality holds:
\begin{theorem}\label{SchLp}
For any $\rho>2$ and $1<p<\infty$, there exists $C_{p,\rho}>0$ such that
\begin{equation}\label{VISchp}
\|\mathscr V_\rho(\Phi\star f)\|_{L^p}\le C_{p,\rho}\|f\|_{L^p}.
\end{equation}
Moreover, for any $\alpha>0$,
\begin{equation}\label{VISch1}
|\{x:\mathscr V_\rho(\Phi\star f)(x)>\alpha\}|\le \frac C{\alpha}\|f\|_{L^1}.
\end{equation}
\end{theorem}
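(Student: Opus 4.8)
The plan is: (i) obtain \eqref{VISchp} as an immediate consequence of the jump estimate of Theorem \ref{ljphip} together with the reduction Lemma \ref{jcv}; (ii) prove the weak-type bound \eqref{VISch1} by a Calder\'on--Zygmund splitting $f=g+b$ in which \eqref{VISchp} with $p=2$ disposes of $g$ and a direct kernel estimate, handling the full continuous family through $\mathscr V_1$, disposes of $b$.

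For (i): fix $\rho>2$ and $p\in(1,\infty)$. Theorem \ref{ljphip} gives $\sup_{\lambda>0}\|\lambda[N_\lambda(\Phi\star f)]^{1/2}\|_{L^q}\le C_q\|f\|_{L^q}$ for every $q\in(1,\infty)$. Choosing $p_0,p_1$ with $1<p_0<\min\{2,p\}$ and $\max\{2,p\}<p_1<\infty$, so that $p_0<2<p_1$ and $p_0<p<p_1$, Lemma \ref{jcv} with $q=2$ yields \eqref{VISchp}.

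For (ii): fix $\alpha>0$ and run the Calder\'on--Zygmund decomposition of $f$ at height $\alpha$, $f=g+b$, with $b=\sum_j b_j$, $\operatorname{supp}b_j\subseteq Q_j$ ($Q_j$ pairwise disjoint dyadic cubes), $\int b_j\,dx=0$, $\|b_j\|_{L^1}\le C\alpha|Q_j|$, $\sum_j|Q_j|\le C\alpha^{-1}\|f\|_{L^1}$, $\|g\|_{L^\infty}\le C\alpha$, $\|g\|_{L^1}\le\|f\|_{L^1}$. By subadditivity of $\mathscr V_\rho$ it suffices to estimate $\mathscr V_\rho(\Phi\star g)$ and $\mathscr V_\rho(\Phi\star b)$ separately. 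For $g$, Chebyshev's inequality, the case $p=2$ of \eqref{VISchp} just proved, and $\|g\|_{L^2}^2\le\|g\|_{L^\infty}\|g\|_{L^1}\le C\alpha\|f\|_{L^1}$ give $|\{x:\mathscr V_\rho(\Phi\star g)(x)>\alpha/2\}|\le C\alpha^{-2}\|g\|_{L^2}^2\le C\alpha^{-1}\|f\|_{L^1}$. For $b$, let $x_j$ be the center of $Q_j$, let $\tilde{Q}_j$ be the concentric dilate of $Q_j$ with side length $8\ell(Q_j)$, and set $\tilde{Q}=\bigcup_j\tilde{Q}_j$, so $|\tilde{Q}|\le C\sum_j|Q_j|\le C\alpha^{-1}\|f\|_{L^1}$; it remains to bound $\mathscr V_\rho(\Phi\star b)$ on $\tilde{Q}^c$. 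Using subadditivity of $\mathscr V_\rho$, the elementary bound \eqref{vr1i}, and the cancellation $\int b_j\,dx=0$, one obtains
\[
\mathscr V_\rho(\Phi\star b)(x)\le\sum_j\int_{Q_j}|b_j(y)|\int_0^\infty\Big|\frac{d}{dt}\big[\phi_t(x-y)-\phi_t(x-x_j)\big]\Big|\,dt\,dy,\qquad x\in\tilde{Q}^c.
\]
Since $\phi\in\mathscr S$ one has $|\partial_t\phi_t(z)|\le C_Nt^{-n-1}(1+|z|/t)^{-N}$ and $|\nabla_z\partial_t\phi_t(z)|\le C_Nt^{-n-2}(1+|z|/t)^{-N}$ for all $N$; combining these with the mean value theorem and the fact (used in the proof of Theorem \ref{ljphip}) that $|x-x_j-\theta(y-x_j)|\sim|x-x_j|$ for $x\in\tilde{Q}_j^c$, $y\in Q_j$, $\theta\in[0,1]$, and taking $N>n+1$, gives
\[
\int_0^\infty\Big|\frac{d}{dt}\big[\phi_t(x-y)-\phi_t(x-x_j)\big]\Big|\,dt\le C\ell(Q_j)\int_0^\infty t^{-n-2}\Big(1+\frac{|x-x_j|}{t}\Big)^{-N}dt\le C\ell(Q_j)|x-x_j|^{-n-1}.
\]
Hence $\int_{\tilde{Q}^c}\mathscr V_\rho(\Phi\star b)(x)\,dx\le C\sum_j\ell(Q_j)\|b_j\|_{L^1}\int_{\tilde{Q}_j^c}|x-x_j|^{-n-1}\,dx\le C\sum_j\|b_j\|_{L^1}\le C\|f\|_{L^1}$, so Chebyshev gives $|\{x\in\tilde{Q}^c:\mathscr V_\rho(\Phi\star b)(x)>\alpha/2\}|\le C\alpha^{-1}\|f\|_{L^1}$. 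Adding the contributions of $\tilde{Q}$, $g$ and $b$ proves \eqref{VISch1}.

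The strong $L^p$ part is a one-line corollary of results already in hand. The substance is the weak-type estimate, and within it the only genuinely non-routine point is the pointwise kernel bound for $\mathscr V_1(\{\phi_t\ast b_j(x)\}_{t>0})$ off the dilated cube; this is a standard smoothness-plus-cancellation computation of Calder\'on--Zygmund type, the one thing to watch being that the scaling $\int_0^\infty t^{-n-2}(1+|x-x_j|/t)^{-N}\,dt\sim|x-x_j|^{-n-1}$ converges, which forces $N>n+1$. Working with the full continuous family directly through $\mathscr V_1$ avoids any long/short-variation splitting.
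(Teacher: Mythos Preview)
Your proof is correct and follows the paper's approach: part (i) is identical to the paper's one-line derivation from Theorem \ref{ljphip} and Lemma \ref{jcv}, and part (ii) is the same Calder\'on--Zygmund argument the paper points to when it says \eqref{VISch1} ``can be proved as \eqref{S2w1}''. The only cosmetic difference is that for the bad part $b$ you bound $\mathscr V_\rho\le\mathscr V_1$ globally over $(0,\infty)$ (exactly as the paper itself does later in Section~3 for the $H^p$ estimate), whereas the referenced proof of \eqref{S2w1} carries out the analogous kernel estimate dyadically through $S_2$; both routes give the same bound $C\ell(Q_j)|x-x_j|^{-n-1}$.
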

\begin{proof}
Clearly, \eqref{JISchp} and Lemma \ref{jcv} imply \eqref{VISchp}. \eqref{VISch1} can be proved as \eqref{S2w1}.
\end{proof}
\begin{theorem}\label{OSchLp}
For any $\{t_i\}\searrow0$ and $1<p<\infty$, there exists $C_p>0$ such that
\begin{equation}\label{OISchp}
\|\mathscr O(\Phi\star f)\|_{L^p}\le C_{p}\|f\|_{L^p}.
\end{equation}
Moreover, for any $\alpha>0$,
\begin{equation}\label{OISch1}
|\{x:\mathscr O(\Phi\star f)(x)>\alpha\}|\le \frac C{\alpha}\|f\|_{L^1}.
\end{equation}
\end{theorem}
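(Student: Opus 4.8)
The strategy is to imitate the proof of Theorem \ref{ljphip}: dominate the oscillation pointwise by the short square function $S_2$ plus a purely dyadic oscillation, then use the $L^p$ bounds already in hand for $S_2$ (Lemma \ref{NdS2}) together with the corresponding bound for the dyadic oscillation, and finally deduce the weak type inequality \eqref{OISch1} by a Calder\'on-Zygmund argument parallel to the proof of \eqref{S2w1}.

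For the reduction, fix the sequence $\{t_i\}$, let $I_i$ be the interval between the consecutive terms $t_i$ and $t_{i+1}$, and put $m_i=\lfloor\log_2 t_i\rfloor$. I would split each $I_i$ according to whether or not it contains a dyadic number $2^k$. If it does not, then $I_i$ lies in a single dyadic block, so a single increment inside $I_i$ is at most $\|\{A_tf(x)\}\|_{v_2(I_i)}$, and by superadditivity of $v_2^2$ over disjoint subintervals of a fixed dyadic block these blocks contribute at most $S_2(\Phi\star f)(x)^2$. If $I_i$ does contain a dyadic number, then for $\varepsilon,\varepsilon'\in I_i$, inserting the largest dyadic numbers $d(\varepsilon)\le\varepsilon$ and $d(\varepsilon')\le\varepsilon'$,
$$|A_{\varepsilon'}f-A_{\varepsilon}f|\le|A_{\varepsilon'}f-A_{d(\varepsilon')}f|+|A_{d(\varepsilon')}f-A_{d(\varepsilon)}f|+|A_{d(\varepsilon)}f-A_{\varepsilon}f|,$$
where the two outer terms are controlled by the $v_2$-variation over the single dyadic block containing $\varepsilon'$, resp.\ $\varepsilon$, and the middle term by the oscillation of $\{A_{2^k}f\}$ over the integer block between $m_{i+1}$ and $m_i$. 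Since consecutive integer blocks overlap only at an endpoint, summing in $i$ gives the oscillation analogue of Lemma \ref{pdN},
$$\mathscr O(\Phi\star f)(x)\le C\bigl(S_2(\Phi\star f)(x)+\mathscr O^d(\Phi\star f)(x)\bigr),$$
where $\mathscr O^d(\Phi\star f)$ is the oscillation of $\{A_{2^k}f\}_{k\in\mathbb Z}$ along the integer sequence $\{m_i\}$.

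It then remains to estimate the two pieces. Since $\hat\phi\in\mathscr S$, $|\hat\phi(\xi)|\le C_b|\xi|^{-b}$ for every $b>0$, so Lemma \ref{NdS2} with $b$ large gives $\|S_2(\Phi\star f)\|_{L^p}\le C_p\|f\|_{L^p}$ for $1<p<\infty$. For the dyadic piece one needs $\|\mathscr O^d(\Phi\star f)\|_{L^p}\le C_p\|f\|_{L^p}$ uniformly over all decreasing integer sequences; writing $A_{2^{k+1}}f-A_{2^k}f=\psi_{2^k}\ast f$ with $\int\psi\,dx=0$ and $\hat\psi$ supported in a lacunary annulus, and passing to the anchored form $\mathscr O^d(\Phi\star f)\le 2\bigl(\sum_i\sup_{m_{i+1}<c\le m_i}|A_{2^c}f-A_{2^{m_{i+1}}}f|^2\bigr)^{1/2}$, this is obtained by the same square-function/almost-orthogonality argument that yields the dyadic jump estimate in Lemma \ref{NdS2}; combined with the reduction this proves \eqref{OISchp}. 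For \eqref{OISch1} I would run the Calder\'on-Zygmund decomposition $f=g+b$ at height $\alpha$, $b=\sum_jb_j$ with $b_j$ supported in $Q_j$ and of mean zero, and $\tilde{Q}=\bigcup\tilde{Q}_j$, so $|\tilde{Q}|\le C\alpha^{-1}\|f\|_{L^1}$. The good part is handled by \eqref{OISchp} with $p=2$, since $|\{x:\mathscr O(\Phi\star g)(x)>\alpha/2\}|\le C\alpha^{-2}\|g\|_{L^2}^2\le C\alpha^{-1}\|f\|_{L^1}$. For the bad part, the oscillation is majorized by the full $1$-variation, so by \eqref{vr1i} and the moment condition $\mathscr O(\Phi\star b)(x)\le\sum_j\int_{Q_j}|b_j(y)|\int_0^\infty|\partial_t[\phi_t(x-y)-\phi_t(x-x_j)]|\,dt\,dy$, and for $x\notin\tilde{Q}_j$ the mean value theorem in the space variable together with $|\nabla\partial_t\phi_t(z)|\le C_Nt^{-n-2}(1+|z|/t)^{-N}$ bounds the inner integral by $Cl(Q_j)|x-x_j|^{-n-1}$ (the non-truncated version of the estimate behind \eqref{eoV2i}); hence $\int_{\tilde{Q}^c}\mathscr O(\Phi\star b)(x)\,dx\le C\sum_jl(Q_j)\|b_j\|_{L^1}\int_{\tilde{Q}_j^c}|x-x_j|^{-n-1}\,dx\le C\|f\|_{L^1}$.

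The combinatorial bookkeeping of the reduction and the kernel estimates are routine; the only serious point is the dyadic oscillation bound. Unlike the $\lambda$-jump, the oscillation is \emph{not} a formal consequence of the $\rho$-variation inequality of Theorem \ref{SchLp}, because the $\ell^2$ sum over blocks defining $\mathscr O^d$ is strictly larger than the $\ell^\rho$ sum ($\rho>2$) that the $\rho$-variation controls, so this step genuinely requires the square-function machinery of \cite{JSW08}.
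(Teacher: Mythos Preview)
Your reduction $\mathscr O(\Phi\star f)\le C\bigl(S_2(\Phi\star f)+\mathscr O^d(\Phi\star f)\bigr)$ and your handling of $S_2$ via Lemma \ref{NdS2} match the paper's proof exactly. The divergence is at the dyadic oscillation $\mathscr O^d$, which you correctly flag as ``the only serious point''. The paper does \emph{not} attack $\mathscr O^d$ by almost-orthogonality of the differences $\psi_{2^k}\ast f$; instead it inserts the dyadic conditional expectations $\mathbb E_k$ and splits $\mathscr O_L(\Phi\star f)\le \mathscr O_L(\{\mathbb E_kf\})+\mathscr O_L(\{\phi_{2^k}\ast f-\mathbb E_kf\})$. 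The first term is the oscillation of a dyadic martingale, for which the $L^p$ and weak $(1,1)$ bounds are quoted from \cite{JKRW98}; the second is dominated by the square function $\mathcal Sf=(\sum_k|\phi_{2^k}\ast f-\mathbb E_kf|^2)^{1/2}$, whose bounds are in \cite{JSW08}. Your sketch---write $A_{2^{k+1}}f-A_{2^k}f=\psi_{2^k}\ast f$ and invoke ``the same square-function/almost-orthogonality argument that yields the dyadic jump estimate''---does not close the gap: almost-orthogonality controls $(\sum_k|\psi_{2^k}\ast f|^2)^{1/2}$, not the anchored oscillation with the inner supremum, and the proof of the dyadic jump in \cite{JSW08} itself goes through a martingale comparison, so transplanting it to oscillation still requires the (separate) martingale \emph{oscillation} inequality of \cite{JKRW98}, not the jump/variation one. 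Also, $\hat\psi(\xi)=\hat\phi(2\xi)-\hat\phi(\xi)$ vanishes at the origin and decays rapidly, but it is not ``supported in a lacunary annulus''.

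For the weak type \eqref{OISch1} you run Calder\'on--Zygmund directly on $\mathscr O$ and bound the bad part by the $1$-variation kernel estimate; this is correct and is a slight variant of the paper's route, which instead uses the pointwise reduction together with the already-known weak $(1,1)$ bounds for $S_2$, for the martingale oscillation, and for $\mathcal S$. Either argument works once the strong $L^2$ bound \eqref{OISchp} is in hand.
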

\begin{proof} Let $k_i$ be the smallest integer such that $2^{k_i}$ greater than or equal to $t_i$. The long oscillation operator is given by
\begin{equation}\nonumber
\mathscr O_L(\Phi\star f)(x)=\big(\sum_{i}\sup_{k_{i+1}\le l\le m\le k_i}|\phi_{2^l}\ast f(x)-\phi_{2^m}\ast f(x)|^2\big)^{1/2}.
\end{equation}
The oscillation inequality follows from the pointwise estimate
\begin{align*}
\mathscr O(\Phi\star f)(x)\le C[S_2(\Phi\star f)(x)+\mathscr O_L(\Phi\star f)(x)].
\end{align*}
Strong $L^p$ estimates and weak $(1,1)$ estimate for $S_2(\Phi\star f)$ have been established as above. For the long oscillation $\mathscr O_L(\Phi\star f)$, we borrow some notations and results from \cite[pp.6724]{JSW08}. For $j\in\mathbb Z$ and $\beta=(m_1,\cdots,m_n)\in\mathbb Z^n$,  we denote the dyadic cube $\prod_{k=1}^n(m_k2^j,(m_k+1)2^j]$ in $\mathbb R^n$ by $Q_\beta^j$, and the set of all dyadic cubes with side length $2^j$ by $\mathcal D_j$. The conditional expectation of a local integrable $f$ with respect to $\mathcal D_j$ is given by
$$
\mathbb E_jf(x)=\sum_{Q\in \mathcal D_j}\frac1{|Q|}\int_{Q}f(y)dy\cdot\chi_{Q}(x)
$$
for all $j\in\mathbb Z$.
Note that $\mathscr O_L$ satisfies
\begin{equation}\nonumber
\mathscr O_L(\Phi\star f)\le \mathscr O_L(\mathscr D f)+\mathscr O_L(\mathscr Ef),
\end{equation}
where
$$
\mathscr Df=\{\phi_{2^k}\ast f-\mathbb E_kf\}_k \quad \text{and}\quad  \mathscr Ef=\{\mathbb E_kf\}_k.
$$
Following inequalities are oscillation inequalities for dyadic martingales (see \cite{JKRW98}),
$$
 |\{x:\mathscr O_L(\mathscr{E}f)(x)>\alpha\}|\le \frac C{\alpha}\|f\|_{L^1}\ \text{and}\ \|\mathscr O_L(\mathscr{E}f)\|_{L^p}\leq C_p\|f\|_{L^p},\ 1<p<\infty.
$$
Next, observe that
\begin{equation}\nonumber
\mathscr O_L(\mathscr D f)\le C\big(\sum_{k\in\mathbb Z}|\phi_k\ast f-\mathbb E_kf|^2\big)^{1/2}:=\mathcal Sf.
\end{equation}
Jones {\it et al} \cite{JSW08} have established the following weak-type $(1,1)$ bound and $L^p$ bounds for $\mathcal S$,
\begin{equation}\label{S1p}
|\{x:\mathcal Sf(x)>\alpha\}|\le \frac C{\alpha}\|f\|_{L^1}\ \text{and}\ \|\mathcal Sf\|_{L^p}\le C_p\|f\|_{L^p},\ \ 1<p<\infty,
\end{equation}
see also \cite{DHL16}.
This completes the proof of Theorem \ref{OSchLp}.
\end{proof}
\begin{remark} With $\phi$ a radial function,
Campbell {\it et al} \cite{CJRW2000,CJRW2002} obtained Theorem \ref{SchLp} and Theorem \ref{OSchLp} by rotation method.
\end{remark}

\section{Variational characterization of $H^p$}
In what follows we shall use the well-known atom decomposition of $H^p$. So, we present the definition of $(p,q)$-atom.
\begin{definition}
Let $0<p\le 1\le q\le\infty$, $p\neq q$. A function $a(x)\in L^q$ is called a $(p,q)$-atom with the center at $x_0$, if it satisfies the following conditions:
\begin{description}
\item (i) Supp $a\subset B(x_0,r)$;
\item (ii) $\|a\|_{L^q}\le |B(x_0,r)|^{\frac1q-\frac1p}$;
\item (iii) $\int a(x)dx=0$.
\end{description}
\end{definition}

\begin{lemma}(\cite{L78})
Let $0<p\le 1$. Given a distribution $f\in H^p$, there exists a sequence of $(p,q)$-atoms  with $1\le q\le\infty$ and $q\neq p$, $\{a_k\}$, and a sequence of scalars $\{\lambda_k\}$ such that
 $$
 f=\sum_{k}\lambda_ka_k\ \ \text{in}\ \ H^p.
 $$
\end{lemma}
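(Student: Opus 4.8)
The plan is to run the classical Calder\'on--Zygmund / Whitney stopping-time construction (Coifman, Latter, Fefferman--Stein), and to reduce to the endpoint case $q=\infty$: once we produce a decomposition into $(p,\infty)$-atoms, the estimate $\|a\|_{L^q}\le \|a\|_{L^\infty}|B|^{1/q}\le |B|^{1/q-1/p}$ shows that each $(p,\infty)$-atom is also a $(p,q)$-atom for every $q\in[1,\infty)$, and the choice $q=\infty$ automatically satisfies $q\neq p$ since $p\le1$. Since $f\in H^p$ means $M_\phi f\in L^p$, I would first invoke the standard equivalence of $M_\phi f$ with the grand maximal function $\mathcal Mf$ (so that the interior regularity estimates below are available) and work with $\mathcal Mf$.

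The construction: for $k\in\mathbb Z$ put $\Omega_k=\{x:\mathcal Mf(x)>2^k\}$, an open set with $|\Omega_k|\le 2^{-kp}\|\mathcal Mf\|_{L^p}^p$; take a Whitney decomposition $\Omega_k=\bigcup_j Q_{k,j}$ with controlled overlap of the dilated cubes, and a smooth partition of unity $\{\eta_{k,j}\}$ subordinate to it. Decompose $f=g_k+\sum_j b_{k,j}$ with $b_{k,j}=(f-c_{k,j})\eta_{k,j}$, where $c_{k,j}=\big(\int\eta_{k,j}\big)^{-1}\langle f,\eta_{k,j}\rangle$ is chosen so that $\int b_{k,j}=0$, and $g_k=f-\sum_j b_{k,j}$. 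Using that $\mathcal Mf\le 2^k$ off $\Omega_k$ and the standard interior bounds on the Whitney cubes, one gets $\|g_k\|_{L^\infty}\le C2^k$, and moreover $g_k\to f$ in $\mathscr S'$ and in $H^p$ as $k\to+\infty$, while $g_k\to 0$ as $k\to-\infty$; hence $f=\sum_k(g_{k+1}-g_k)$ with convergence in $H^p$.

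Next I would unwind $g_{k+1}-g_k$ by comparing the partitions at levels $k$ and $k+1$, writing $g_{k+1}-g_k=\sum_j A_{k,j}$, where each $A_{k,j}$ is supported in a fixed dilate $\widetilde Q_{k,j}$ of $Q_{k,j}$, has mean zero, and satisfies $\|A_{k,j}\|_{L^\infty}\le C2^k$ (the size gain comes from the $L^\infty$ bound on $g_{k+1}$ together with the bounded overlap of the level-$k$ and level-$(k+1)$ Whitney families). Setting $\lambda_{k,j}=C2^k|\widetilde Q_{k,j}|^{1/p}$ and $a_{k,j}=\lambda_{k,j}^{-1}A_{k,j}$, each $a_{k,j}$ is a $(p,\infty)$-atom and
\[
\sum_{k,j}|\lambda_{k,j}|^p\le C\sum_k 2^{kp}\sum_j|Q_{k,j}|\le C\sum_k 2^{kp}|\Omega_k|\le C\sum_k 2^{kp}|\{x:\mathcal Mf(x)>2^k\}|\le C\|\mathcal Mf\|_{L^p}^p\le C\|f\|_{H^p}^p.
\]
Combining the $H^p$-convergence of $\sum_k(g_{k+1}-g_k)$ with the quasi-subadditivity bound $\|\sum_j\lambda_{k,j}a_{k,j}\|_{H^p}^p\le C\sum_j|\lambda_{k,j}|^p$ upgrades this to convergence of the full atomic series $\sum_{k,j}\lambda_{k,j}a_{k,j}$ to $f$ in $H^p$.

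The main obstacle is the bookkeeping in the third paragraph: verifying that $g_{k+1}-g_k$ really splits into boundedly-many (per cube) mean-zero pieces of sharp size $\sim 2^k$ with controlled supports requires carefully tracking how level-$(k+1)$ Whitney cubes meet level-$k$ cubes and how the two partitions of unity interact; a secondary, more routine but still essential, point is justifying that the tails $g_k$ converge to $f$ in the $H^p$ quasi-norm (not merely as distributions), which uses the uniform $L^\infty$ bound $\|g_k\|_\infty\le C2^k$ and the decay $|\Omega_k|\lesssim 2^{-kp}\|f\|_{H^p}^p$.
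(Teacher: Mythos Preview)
The paper does not prove this lemma at all; it is stated as a known result and attributed to Latter. Your sketch is precisely the standard Calder\'on--Zygmund stopping-time construction from that reference (and its later expositions by Coifman and by Stein), so there is nothing substantive to compare: you have essentially outlined the cited proof.

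One point worth flagging. The paper's definition of a $(p,q)$-atom imposes only the single vanishing moment $\int a=0$, and your construction (subtracting scalars $c_{k,j}$) likewise produces only mean-zero pieces. This suffices for the range $\tfrac{n}{n+1}<p\le1$ in which the paper actually applies the lemma, but for smaller $p$ one needs vanishing moments up to order $\lfloor n(1/p-1)\rfloor$, and the constants $c_{k,j}$ must be replaced by projections onto polynomials of the appropriate degree. So the lemma as literally stated (all $0<p\le1$ with a single moment condition) is not quite right; this is a defect in the paper's formulation rather than in your argument, and your proof is correct in the range that matters here.
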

\textit{Proof of  Theorem \ref{SchHp}}.\ \ In proving Theorem \ref{SchHp} we consider first the inequality \eqref{SchHp}.
By \cite[Theorem 1.1]{YZ08}, we just need to prove that there exists a positive constant $C$ such that
$\|\mathscr V_\rho(\Phi\star a)\|_{L^p}\le C$ for any $(p,2)$ atom $a$ and $p\in(\frac{n}{n+1},1]$.
\par
Suppose $a$ is supported in a cube $Q$, $x_0$ is the center of $Q$, write $\tilde{Q}=8Q$. By H\"{o}lder's inequality and Theorem \ref{SchLp},
$$
\int_{\tilde{Q}}\mathscr V_\rho(\Phi\star a)^p(x)dx\le |\tilde{Q}|^{1-\frac p2}\|\mathscr V_\rho(\Phi\star a)\|_{L^2}^p\le C|Q|^{1-\frac p2}\|a\|^p_{L^2}\le C.
$$
\par
To deal with $x\in (\tilde{Q})^c$ one uses the cancelation condition of $a$ and Minkowski's inequality,
\begin{align*}
&\mathscr V_\rho(\Phi\star a)(x)\\
&=\sup_{\{\varepsilon_k\}\searrow0}\bigg(\sum_{k}\bigg|\int_{\mathbb R^n}\bigg\{\big[\phi_{\varepsilon_k}(x-y)-\phi_{\varepsilon_{k+1}}(x-y)\big]-\big[\phi_{\varepsilon_k}(x-x_0)-\phi_{\varepsilon_{k+1}}(x-x_0)\big]\bigg\}a(y)dy\bigg|^\rho\bigg)^{\frac1\rho}\\
&\le\int_{Q}|a(y)| \sup_{\{\varepsilon_k\}\searrow0}\bigg(\sum_{k}\bigg|\big[\phi_{\varepsilon_k}(x-y)-\phi_{\varepsilon_k}(x-x_0)\big]-\big[\phi_{\varepsilon_{k+1}}(x-y)-\phi_{\varepsilon_{k+1}}(x-x_0)\big]\bigg|^\rho\bigg)^{\frac1\rho}dy\\
&\le\int_{Q}|a(y)|\|\Phi(x-y)-\Phi(x-x_0)\|_{v_\rho}dy.
\end{align*}
For $x\in (\tilde{Q})^c$, $y\in Q$ and $\theta\in(0,1)$, we have $|x-x_0+\theta (y-x_0)|\sim|x-x_0|$. Note that $\phi(\frac{x-y}t)$ is a smooth function of $t$ on $(0,\infty)$ for any fixed $x,y$.  Applying \eqref{vr1i} and the mean value theorem, we estimate
\begin{align*}
\nonumber\|\Phi(x-y)-\Phi(x-x_0)\|_{v_\rho}\le&\|\Phi(x-y)-\Phi(x-x_0)\|_{v_1}\\
\le& C|y-x_0|\int_0^\infty\frac1{t^{n+2}}\big(1+\frac{|x-x_0|}t)^{-(n+2)}dt\\
\nonumber\le& C\frac{|y-x_0|}{|x-x_0|^{n+1}}\int_0^\infty\frac{t^n}{(1+t)^{n+2}}dt\\
\nonumber\le&C\frac{|y-x_0|}{|x-x_0|^{n+1}}.
\end{align*}
Hence, we obtain the desired bound
\begin{align*}
\int_{(\tilde Q)^c}\mathscr V_\rho(\Phi\star a)^p(x)dx\le C\int_{|x-x_0|\ge 8l(Q)}\frac{l^p(Q)}{|x-x_0|^{(n+1)p}}dx\big(\int_Q|a(y)|dy\big)^p\le C,
\end{align*}
finishing the proof of \eqref{SchHp}.
\par
We now turn to the equivalence of two conditions in Theorem \ref{SchHp}. Note that $M_\phi f(x)\le |\phi\ast f(x)|+\mathscr V_\rho(\Phi\star f)(x)$. Thus, $(ii)$ implies $(i)$. Conversely, $M_\phi f\in L^p$ means $f\in H^p$ for $\frac n{n+1}<p\le 1$ and $f\in L^p$ for $1<p<\infty$. Also, the pointwise estimate $\phi\ast f(x)\le M_\phi f(x)$ shows $\phi\ast f\in L^p$ for $\frac n{n+1}<p<\infty$. From \eqref{SchHp} and \eqref{VISchp}, we obtain $\mathscr V_\rho(\Phi\star f)\in L^p$, finishing the proof of Theorem \ref{SchHp}.
\section{Oscillation on $H^p$}
 Estimates for the oscillation operator acting on $H^p$ are trivial. It suffices to show that there exists a positive constant $C$ such that
$\|\mathscr O(\Phi\star a)\|_{L^p}\le C$ for any $(p,2)$ atom $a$ and $p\in(\frac{n}{n+1},1]$. By Theorem \ref{OSchLp} and H\"{o}lder's inequality, we have
$$
\int_{\tilde{Q}}\mathscr O(\Phi\star a)^p(x)dx\le C|Q|^{1-\frac p2}\|\mathscr O(\Phi\star a)\|_{L^2}^p\le C|Q|^{1-\frac p2}\|a\|_{L^2}^p\le C.
$$
Next we consider the integral of $\mathscr O(\Phi\star a)^p$ on $(\tilde{Q})^c$.  Note that $\mathscr O(\Phi\star a)(x)\le C\mathscr V_2(\Phi\star a)(x)$. Applying the same argument, we obtain
\begin{equation}\nonumber
\int_{(\tilde{Q})^c}\mathscr O(\Phi\star a)^p(x)dx\le C\int_{(\tilde{Q})^c}\mathscr V_2(\Phi\star a)^p(x)dx\le C\int_{|x-x_0|\ge 8l(Q)}\frac{l^p(Q)}{|x-x_0|^{(n+1)p}}\big(\int_Q|a(y)|dy\big)^pdx \le C.
\end{equation}
Hence, we get $\|\mathscr O(\Phi\star f)\|_{L^p}\le C\|f\|_{H^p}$.
\par
We now turn to the negative result in Theorem \ref{oHp}. Let $\phi(x)=f(x)=e^{-x^2}$ for $x\in\mathbb R$, $\phi_t\ast f(x)=\frac1{\sqrt{t^2+1}}e^{-\frac{x^2}{1+t^2}}$ , $\phi\ast f(x)=\frac{\sqrt2}{2}e^{-\frac{x^2}{2}}\in L^p$ for any $p\in(0,1]$. Define $F(s,x)=se^{\frac{x^2}{s^2}}$ for $(s,x)\in(1,+\infty)\times \mathbb R^+$. Clearly, for fixed $x\in[0,\frac{\sqrt{2}}2]$, $F(s,x)$ is increasing respect to $s$; for fixed $x\in(\frac{\sqrt{2}}2,+\infty)$, $F(s,x)$ is decreasing on $(1,\sqrt2x]$ and increasing on $(\sqrt2x,+\infty)$. Note that $M_\phi(f)$ is even. Therefore,
\begin{equation}\nonumber
M_\phi(f)(x)=
\begin{cases}
e^{-x^2},&\mbox{$x\in[-\frac{\sqrt{2}}2,\frac{\sqrt{2}}2]$,}\\
\frac1{\sqrt{2e}}\frac1x, &\mbox{ $x\in(-\infty,-\frac{\sqrt{2}}2)\bigcup(\frac{\sqrt{2}}2,+\infty)$.}
\end{cases}
\end{equation}
Obviously, $M_\phi(f)\notin L^p$ for any $p\in(0,1]$.
\par
For the oscillation of $\Phi\star f$, we take $t_n=\frac1n$ and use the following pointwise estimate
\begin{align*}
\mathscr O(\Phi\star f)(x)&\le \sum_n\sup_{\frac1{n+1}\le\varepsilon_{n+1}<\varepsilon_n\le \frac1n}|\phi_{\varepsilon_{n+1}}\ast f(x)-\phi_{\varepsilon_{n}}\ast f(x)|.
\end{align*}
For fixed $x\in[0,\frac{\sqrt2}2]$, $\frac 1se^{-\frac{x^2}{s^2}}$ is decreasing on $(1,+\infty)$. So,
\begin{align*}
\mathscr O(\Phi\star f)(x)&\le \sum_n|\phi_{\frac1{n+1}}\ast f(x)-\phi_{{\frac1n}}\ast f(x)|\\
&=\phi_0\ast f(x)-\phi_1\ast f(x)=e^{-x^2}-\frac{\sqrt2}2e^{-\frac{x^2}2}.
\end{align*}
For fixed $x\in(\frac{\sqrt2}2,1)$, $\frac1se^{-\frac{x^2}{s^2}}$ is increasing on $(1,\sqrt2x]$ and decreasing on $(\sqrt2x,\sqrt2]$. We estimate
\begin{align*}
\mathscr O(\Phi\star f)(x)&\le \sum_{\frac1{n+1}\le \sqrt{2x^2-1}}\sup_{\frac1{n+1}\le\varepsilon_{n+1}<\varepsilon_n\le \frac1n}|\phi_{\varepsilon_{n+1}}\ast f(x)-\phi_{\varepsilon_{n}}\ast f(x)|\\
&+\sum_{\frac1n> \sqrt{2x^2-1}}\sup_{\frac1{n+1}\le\varepsilon_{n+1}<\varepsilon_n\le \frac1n}|\phi_{\varepsilon_{n+1}}\ast f(x)-\phi_{\varepsilon_{n}}\ast f(x)|\\
&\le \sqrt{\frac2e}\frac1x-e^{-x^2}-\frac{\sqrt2}2e^{-\frac{x^2}2}.
\end{align*}
For fixed $x\in[1,+\infty)$, $\frac1se^{-\frac{x^2}{s^2}}$ is increasing on $(1,\sqrt2]$. Hence
\begin{align*}
\mathscr O(\Phi\star f)(x)&\le \sum_n|\phi_{\frac1{n+1}}\ast f(x)-\phi_{{\frac1n}}\ast f(x)|\\
&=\phi_1\ast f(x)-\phi_0\ast f(x)=\frac{\sqrt2}2e^{-\frac{x^2}2}-e^{-x^2}.
\end{align*}
Obviously,$\mathscr O(\Phi\star f)$ is even and $\mathscr O(\Phi\star f)\in L^p$ for any $p\in(0,1]$, completing the proof of Theorem \ref{oHp}.
\section{$\lambda$-jump on $H^p$}
\textit{Proof of  Theorem \ref{ljrhp}}.\ \
It is clear that for $\lambda>0$ and $\rho\ge1$
 \begin{equation}\nonumber
\lambda[N_\lambda(\Phi\star f)(x)]^{1/\rho}\leq C_\rho\mathscr V_\rho(\Phi\star f)(x).
\end{equation}
Consequently, we have
\begin{equation}\nonumber
\|\lambda[N_\lambda(\Phi\star f)]^{1/\rho}\|_{L^p}\leq C_\rho\|\mathscr V_\rho(\Phi\star f)\|_{L^p}\le C_\rho\|f\|_{H^p},\ \ p\in(\frac{n}{n+1},1],
\end{equation}
uniformly in $\lambda>0$.
\par
For counterexample, we take $\phi(x)=f(x)=e^{-x^2}$. Obviously, $f\notin H^p$ and $M_\phi(f)\notin L^p$ for any $p\in(0,1]$.
\par
 When $\lambda\ge1$, we have $N_\lambda(\Phi\star f)(x)\equiv0$ and $\|\lambda[N_\lambda(\Phi\star f)]^{1/\rho}\|_{L^p}<\infty$ uniformly in $\lambda>0$ for any $p\in(0,1]$.
 \par
 When
 $\frac1{\sqrt e}\le\lambda<1$, we get $N_\lambda(\Phi\star f)(x)\le e^{-x^2}\lambda^{-1}$ for $|x|\le \sqrt{-\ln\lambda}$ and $N_\lambda(\Phi\star f)(x)=0$ for $\sqrt{-\ln\lambda}<x$. Hence, $\|\lambda[N_\lambda(\Phi\star f)]^{1/\rho}\|_{L^p}<\infty$ uniformly in $\lambda>0$ for any $p\in(0,1]$.
 \par
 When $0<\lambda<\frac1{\sqrt e}$, we obtain
 \begin{equation}\nonumber
N_\lambda(\Phi\star f)(x)\le C
\begin{cases}
e^{-x^2}\lambda^{-1},&\mbox{$x\in[-\frac{\sqrt{2}}2,\frac{\sqrt{2}}2]$,}\\
\sqrt{\frac 2e}\frac1{\lambda |x|}- e^{-x^2}\lambda^{-1}, &\mbox{ $x\in(-\frac1{\lambda\sqrt{2e}},-\frac{\sqrt{2}}2)\bigcup(\frac{\sqrt{2}}2,\frac1{\lambda\sqrt{2e}})$,}\\
0,&\mbox{ $x\in(-\infty,-\frac1{\lambda\sqrt{2e}})\bigcup(\frac1{\lambda\sqrt{2e}},+\infty)$.}
\end{cases}
\end{equation}
One can establish the following $L^p$ bounds:
\begin{align*}
\|\lambda[N_\lambda(\Phi\star f)]^{1/\rho}\|_p^p&\le C \lambda^{p(1-1/\rho)}\int_0^{\frac{\sqrt2}2}e^{-px^2/\rho}dx+C\lambda^{p(1-1/\rho)}\int_{\frac{\sqrt2}2}^{\frac1{\lambda\sqrt{2e}}}x^{-\frac p{\rho}}dx\\
&\le C+C\lambda^{p-1}.
\end{align*}
Consequently, $\|\lambda[N_\lambda(\Phi\star f)]^{1/\rho}\|_{L^1}<\infty$ uniformly in $\lambda>0$ for $\rho\in(1,\infty)$.\qed
\par
Theorems 1.4 suggests the following improvement:
\begin{conjecture}
For $\frac n{n+1}<p\le 1$, there exists $C_{p}>0$ such that
\begin{equation}\nonumber
\|\lambda\sqrt{N_\lambda(\Phi\star f)}\|_{L^p}\le C_{p}\|f\|_{H^p},
\end{equation}
uniformly in $\lambda>0$.
\end{conjecture}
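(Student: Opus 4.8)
\emph{Proof proposal.} The plan is to run, for the jump quantity $f\mapsto\lambda\sqrt{N_\lambda(\Phi\star f)}$, the same two-region scheme used for Theorems~\ref{SchHp} and~\ref{oHp}. First I would reduce to atoms: as in the proof of Theorem~\ref{SchHp} it suffices to produce a constant $C$, independent of $\lambda>0$ and of the atom, with $\|\lambda\sqrt{N_\lambda(\Phi\star a)}\|_{L^p}\le C$ for every $(p,2)$-atom $a$ and $\frac n{n+1}<p\le1$. Fix such an $a$, supported in a cube $Q$ centered at $x_0$, and set $\tilde Q=8Q$. On $\tilde Q$ one combines H\"older's inequality with the uniform-in-$\lambda$ $L^2$ bound of Theorem~\ref{ljphip},
\[
\int_{\tilde Q}\big(\lambda\sqrt{N_\lambda(\Phi\star a)}\big)^{p}\,dx\le|\tilde Q|^{1-p/2}\big\|\lambda\sqrt{N_\lambda(\Phi\star a)}\big\|_{L^2}^{p}\le C|Q|^{1-p/2}\|a\|_{L^2}^{p}\le C .
\]
On $(\tilde Q)^{c}$ I would pass from the jump to the variation via \eqref{contr ineq} with $\rho=2$ and then $\|\cdot\|_{v_2}\le\|\cdot\|_{v_1}$, so that $\lambda\sqrt{N_\lambda(\Phi\star a)}(x)\le C\mathscr V_2(\Phi\star a)(x)\le C\mathscr V_1(\Phi\star a)(x)$, and then apply verbatim the smoothness estimate from the proof of Theorem~\ref{SchHp} to obtain $\lambda\sqrt{N_\lambda(\Phi\star a)}(x)\le C\,l(Q)\,|x-x_0|^{-(n+1)}\int_Q|a|$ for $x\in(\tilde Q)^{c}$; integrating the $p$-th power over $|x-x_0|\ge 8l(Q)$ converges precisely because $p>\frac n{n+1}$. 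So the atomic inequality is within reach with material already in the paper.

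The step I expect to be the main obstacle is the passage from the atomic inequality to the $H^p$ bound. For a subadditive operator this is the reduction used in Theorem~\ref{SchHp}, but $N_\lambda$ is not subadditive; the best one has is the variable-height inequality
\[
N_\lambda\Big(\sum_k F^{(k)}\Big)(x)\le\sum_k N_{\mu_k\lambda}\big(F^{(k)}\big)(x),\qquad \mu_k>0,\ \ \sum_k\mu_k\le1,
\]
obtained by assigning each $\lambda$-jump interval of $\sum_k F^{(k)}$ to an index $k$ along which the jump of $F^{(k)}$ exceeds $\mu_k\lambda$. Applying this to an atomic decomposition $f=\sum_k\lambda_k a_k$ and using $p/2<1$ would give
\[
\big\|\lambda\sqrt{N_\lambda(\Phi\star f)}\big\|_{L^p}^{p}\le\sum_k\Big(\frac{|\lambda_k|}{\mu_k}\Big)^{p}\Big\|\frac{\mu_k\lambda}{|\lambda_k|}\sqrt{N_{\mu_k\lambda/|\lambda_k|}(\Phi\star a_k)}\Big\|_{L^p}^{p}\le C\sum_k\Big(\frac{|\lambda_k|}{\mu_k}\Big)^{p},
\]
and no admissible choice of $\{\mu_k\}$ makes $\sum_k(|\lambda_k|/\mu_k)^{p}$ comparable to $\sum_k|\lambda_k|^{p}$: $N$ equal-size atoms force, after optimizing $\mu_k=1/N$, the right side to be of order $N^{p}$ while $\sum_k|\lambda_k|^{p}=1$. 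This $\lambda$-splitting loss is exactly what the $\rho>2$ variation sidesteps by Minkowski in the $v_\rho$ norm, and it is why only Theorem~\ref{ljrhp}, not its $\rho=2$ endpoint, is established.

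To close the gap one would have to replace this crude quasi-subadditivity by a sharper device. One route is a good-$\lambda$ inequality comparing $\lambda\sqrt{N_\lambda(\Phi\star f)}$ with the grand maximal function of $f$, together with a priori finiteness of $\|\lambda\sqrt{N_\lambda(\Phi\star f)}\|_{L^p}$ on a dense subclass of $H^p$, which would transfer the $L^p$ ($p>1$) bounds of Theorem~\ref{ljphip} down to $p\in(\frac n{n+1},1]$. A second route is to use Lemma~\ref{pdN} to split $\lambda\sqrt{N_\lambda}\le C[S_2+\lambda\sqrt{N^d_{\lambda/3}}]$: the $H^p$ bound for $S_2(\Phi\star f)$ is obtainable by the genuinely subadditive atomic argument above, since $S_2(\Phi\star a)(x)\le C\,l(Q)\,|x-x_0|^{-(n+1)}\int_Q|a|$ on $(\tilde Q)^c$ by \eqref{eoV2i}, so the problem would reduce to the dyadic $\lambda$-jump $N^d_{\lambda/3}(\Phi\star f)$ on $H^p$. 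Decomposing $\phi_{2^k}\ast f=(\phi_{2^k}\ast f-\mathbb E_kf)+\mathbb E_kf$ as in the proof of Theorem~\ref{OSchLp} isolates a square-function error controlled by $\mathcal Sf$ and a dyadic-martingale jump, but a martingale $H^p$ jump bound at exponent $2$ for $p<1$ is needed there, and I do not see how to extract it from the results in the paper. Controlling the dyadic $\lambda$-jump on $H^p$ for $p<1$ is thus the crux, which is why the statement is offered only as a conjecture.
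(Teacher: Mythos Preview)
Your analysis is well-aimed, but you should recognize that the paper offers \emph{no} proof of this statement: it is explicitly stated as a conjecture, and the only accompanying text is the sentence ``our current techniques do not allow us to prove it.'' There is therefore no paper proof to compare against.

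That said, your proposal is not a proof either, and you appear to know this. Your two-region atomic estimate is correct and is indeed within reach of the paper's tools: the near-$\tilde Q$ piece follows from Theorem~\ref{ljphip} and H\"older, and the far piece follows from \eqref{contr ineq} at $\rho=2$ together with the $v_1$-smoothness bound already used for Theorem~\ref{SchHp}. So $\|\lambda\sqrt{N_\lambda(\Phi\star a)}\|_{L^p}\le C$ uniformly in $\lambda$ for every $(p,2)$-atom is genuinely established.

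You then identify the real obstruction precisely, and in more detail than the paper does: the operator $f\mapsto\lambda\sqrt{N_\lambda(\Phi\star f)}$ is not subadditive, so the Yang--Zhou reduction \cite{YZ08} invoked in the proof of Theorem~\ref{SchHp} does not apply, and your variable-height splitting calculation shows quantitatively why the naive repair fails. Your two proposed alternative routes (good-$\lambda$ against the grand maximal function; or reduction via Lemma~\ref{pdN} to a dyadic-martingale jump bound on $H^p$ for $p<1$) are reasonable directions, but neither is completed, and in particular the martingale $H^p$ jump inequality at exponent $2$ that the second route requires is not available in the paper. Your closing sentence is the correct conclusion: the statement remains a conjecture, and your proposal is a clear articulation of \emph{why}, not a proof.
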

In the case of analogous for variation, oscillation and $\lambda$-jump operators, we know the conjecture above is possible. However, our current techniques do not allow us to prove it.

\end{document}